\newtheorem{theorem}{Theorem}[section]
\newtheorem{lemma}[theorem]{Lemma}
\newtheorem{proposition}[theorem]{Proposition}
\newtheorem{corollary}[theorem]{Corollary}
\newtheorem{definition}[theorem]{Definition}
\newtheorem{remark}[theorem]{Remark}
\newcommand\supp{\mathop{\rm supp}}
\newcommand\tr{\mathop{\rm tr}}
\newcommand\bist{\mathop{\mathsf{C}}}
\newcommand\ball{\mathop{\rm Ball}}
\newcommand\cnm{\mathop \mathbb{C}^n\otimes\mathbb{C}^m}
\newcommand\Tr{\mathop{\rm Tr}}
\newcommand\eps{\varepsilon}
\newcommand\ot{\otimes}
\newcommand{\cl}[1]{\mathcal{#1}}
\newcommand{\bb}[1]{\mathbb{#1}}
\begin{document}

\title{Coupling capacity in C*-algebras}

\author[A. Skalski]{Adam Skalski}
\address{Institute of Mathematics of the Polish Academy of Sciences, ul. \'Sniadeckich 8, 00-656 Warszawa, Poland}
\email{a.skalski@impan.pl}

\author[I. G. Todorov]{Ivan G. Todorov}
\address{
School of Mathematical Sciences, University of Delaware, 501 Ewing Hall,
Newark, DE 19716, USA}
\email{todorov@udel.edu}

\author[L. Turowska]{Lyudmila Turowska}
\address{Department of Mathematical Sciences, Chalmers University
of Technology and the University of Gothenburg, Gothenburg SE-412 96, Sweden}
\email{turowska@chalmers.se}

\date{18 February 2023}

\maketitle


\begin{abstract}
Given two unital C*-algebras equipped with states and a positive operator in 
the enveloping von Neumann algebra of their minimal tensor product, we 
define three parameters that measure the capacity of the operator to align with a coupling 
of the two given states. Further we establish a duality formula that shows the equality of two of the parameters for operators in the minimal tensor product of the relevant C*-algebras.
In the context of  abelian C*-algebras our parameters are related to  quantitative versions of Arveson's Null Set Theorem and to dualities considered in the theory of optimal transport. On the other hand,   restricting to matrix algebras we recover and generalise quantum versions of Strassen's Theorem. We show that in the latter case our parameters can detect maximal entanglement and separability. 
\end{abstract}


\section{Introduction}\label{s_intro}

Strassen's Theorem \cite{strassen} characterising the existence of a probability measure on a product measurable space,
having fixed marginals and prescribed support, has enjoyed an illustrious history, both leading to new fruitful 
research directions and
having significant applications.
Such joint probability measures, known as couplings of the pair of original measures, 
are the starting point of the theory of optimal transport and
appear as a fundamental concept in the celebrated Monge-Kantorovich duality \cite{villani0, villani}. 
They are also at the heart of Arveson's Null Set Theorem \cite{a}, which formed the base of vast parts of 
non-selfadjoint operator algebra theory and had a lasting impact on the study of invariant spaces 
for collections of Hilbert space operators (see \cite{dav}). 
Arveson's Null Set Theorem was given a quantitative formulation by Haydon and Shulman \cite{hs}; 
the quantifying parameters defined therein 
were shown in \cite{hs} to be capacities in the sense of Choquet's capacitability theory 
\cite{choquet}.

Recently, a quantum version of Strassen's Theorem was established \cite{zyyy}, inspired by applications 
to quantum information theory. In the latter setting, the result identifies necessary and sufficient conditions
for the existence of a state on the tensor product of two matrix algebras with prescribed 
marginal states. A study of related phenomena in the case of infinite dimensional type I factors was pursued in \cite{fgz}. 


The aim of the present paper is to formulate and exploit 
a common framework that unifies and extends the several aforementioned themes.
Given two unital C*-algebras $\cl A$ and $\cl B$, equipped with respective states $\phi$ and $\psi$, 
we introduce three parameters that measure the capacity that
the couplings of $\phi$ and $\psi$ -- 
that is, states on the minimal tensor product $\cl A\otimes \cl B$ whose marginals coincide with $\phi$ and 
$\psi$, respectively -- align with a given positive operator $T$ in the enveloping von Neumann algebra
$(\cl A\otimes\cl B)^{**}$. 
In the case $T$ is an orthogonal projection, 
these parameters can be thought of as capacities of that projection to support a
quantum coupling of the two given states.
We establish a duality result of Monge-Kantorovich type in this context, stating that 
two of the introduced parameters coincide whenever $T \in \cl A \otimes \cl B$ (see Theorem \ref{th_bg}), 
and are bounded from above by the third.

Restricting to abelian C*-algebras and to orthogonal projections, 
we show that our parameters coincide with the 
Choquet capacities of Haydon and Shulman  (see \cite{hs}). 
The positive operator $T\in (\cl A\otimes\cl B)^{**}$ can in this case be thought of as a measurable
cost function in the sense of the theory of optimal transport \cite{villani0}. 
On the other hand, restricting to the case where the C*-algebras are matrix algebras, 
we see that the duality result implies 
the quantum versions of Strassen's Theorem established in \cite{zyyy, fgz}. 
Thus, our result can  be thought of as a quantitative extension of a
C*-algebra version of Strassen's Theorem, closely related to 
a non-commutative  version of Arveson's Null Set Theorem.

We show that, in the case of matrix algebras, 
the introduced coupling capacities
can detect maximal entanglement and separability of bipartite states
(see Theorem \ref{p_sep}). 
We further establish several general facts, showing 
that our parameters enjoy natural continuity properties, both when considered as functions on 
the positive operator in $\cl A\otimes\cl B$, and on the pair $(\phi,\psi)$ of states. Finally we would like to note that in recent years the (noncommutative) optimal transport techniques appeared in the operator algebraic contexts ranging from the classification theory of C*-algebras \cite{jsv} to free probability \cite{gjns}.

The detailed plan of the paper is as follows: after describing the basic notation in the remainder of the introduction, in Section 2 we introduce our capacities, establish the relationship between them (notably in Theorem \ref{th_bg}) and study the relevant continuity properties. Here we also  discuss the commutative case, providing the connection to Arveson's Null Set Theorem and to the classical  Monge-Kantorovich duality. Finally in Section 3 we focus on the matrix case, deducing the quantum Strassen's theorem of \cite{zyyy} from our main results, developing the connection between coupling capacities and entanglement, and discussing several examples.

\smallskip


We finish this section with setting notation. 
For a C*-algebra $\cl A$, we denote by 
$\cl A_h$ the real vector space of all hermitian elements in $\cl A$, 
by $\cl A_+$ the cone of its positive elements, and 
by $\cl P(\cl A)$ the set of all projections in $\cl A$; note that when $\cl A$ is a von Neumann algebra,
$\cl P(\cl A)$ is a complete ortho-lattice. We use standard notation for the
supremum ($\vee$) and infimum ($\wedge$) in $\cl P(\cl A)$.
We denote by 
$\cl A^*$ the dual of $\cl A$, by $\cl A^*_+$ the positive functionals on $\cl A$,
and by $\cl A^{**}$ the second dual of $\cl A$. We view $\cl A^{**}$ as
the enveloping von Neumann algebra of $\cl A$, and $\cl A$ as a C*-subalgebra
of $\cl A^{**}$.
If $\phi\in \cl A^*$ then $\phi$ has a unique extension to a
weak* continuous functional on $\cl A^{**}$, which will be denoted by the same symbol; this operation preserves the property of being a state.

All C*-algebras considered in the paper will be unital; the unit of a C*-algebra $\cl A$ will be denoted by $1_{\cl A}$
(or $1$ if there is no danger of confusion). 
An \emph{operator system} in a C*-algebra $\cl A$ is a selfadjoint (and not necessarily closed) 
linear subspace of $\cl A$ containing $1_{\cl A}$. 
A \emph{state} of an operator system $\cl S$ is a positive functional $f : \cl S\to \bb{C}$ such that $f(1_{\cl A}) = 1$; 
the (convex) set of all states of $\cl S$ is denoted by $S(\cl S)$. 

We write $M_n$ for the algebra of all $n$ by $n$ matrices, and ${\rm tr}$ (resp. ${\rm Tr}$) 
for the normalised (resp.\ taking value $1$ on minimal projections) trace on $M_n$. 
If we want to emphasise the underlying dimension, we write  $\tr_n$. 
We let $(\epsilon_{i,j})_{i,j=1}^n$ be the canonical matrix unit system in $M_n$. 
Given a state $\omega : M_n\to \bb{C}$, there exists a unique positive semi-definite matrix $A_{\omega}$
with $\tr(A_{\omega}) = 1$ 
(called the density matrix of $\omega$)
such that $\omega(B) = \tr(A_{\omega}B)$, $B\in M_n$. 
We will sometimes identify $\omega$ with $A_{\omega}$. 
In the lack of preferred matrix unit system inside $M_n$, we will use the notation $\cl L(\bb{C}^n)$. 
Given vectors $\xi$ and $\eta$, we use the notation $\xi\eta^*$ for the rank one operator given by 
$(\xi\eta^*)(\zeta) = \langle \zeta,\eta\rangle \xi$. Note that the scalar products are linear on the left.

If $X$ is a compact Hausdorff space, we denote as usual by $C(X)$ the (abelian) C*-algebra of all 
continuous complex-valued functions on $X$ and by $M(X)$ the space of all complex Borel measures on $X$. 
Note that, by the
Riesz Representation Theorem, $M(X)$ can be canonically identified with $C(X)^*$.


\section{Definition of coupling capacities and their fundamental properties} \label{s_cc}

In this section, we define three parameters that form the focus of the paper
and examine some of their properties. The main result of the section is
Theorem \ref{th_bg}, which can be thought of as a non-commutative Monge-Kantorovich type duality.


\subsection{Definitions} \label{ss_def}

Let $\cl A$ and $\cl B$ be unital C*-algebras, equipped with states $\phi$ and $\psi$,
respectively.
We denote by $\cl A\otimes \cl B$ (resp. $\cl A\odot \cl B$) 
the minimal (resp. the algebraic) tensor product of $\cl A$ and $\cl B$.
For an element $\sigma\in (\cl A\otimes\cl B)^*$, we denote by
$\sigma_{\cl A}$ (resp. $\sigma_{\cl B}$) the
element of $\cl A^*$ (resp. $\cl B^*$) given by
$$\sigma_{\cl A}(a) = \sigma(a\otimes 1) \ \ \ \ (\mbox{resp. } \sigma_{\cl B}(b) = \sigma(1\otimes b));$$
thus, $\sigma_{\cl A}$ (resp. $\sigma_{\cl B}$) is the $\cl A$-marginal
(resp. the $\cl B$-marginal) of $\sigma$.

\begin{definition}\label{d_coup}
A positive functional $\sigma : \cl A\otimes \cl B\to \bb{C}$ is called a 
\emph{coupling} of the states $\phi$ and $\psi$ (or a \emph{$(\phi,\psi)$-coupling}) if 
$\sigma_{\cl A} = \phi$ and $\sigma_{\cl B} = \psi$.
\end{definition}

\noindent 
We denote by $\bist(\phi,\psi)$ the set of all $(\phi,\psi)$-couplings. Note that each $(\phi,\psi)$-coupling is automatically a state and that $\bist(\phi,\psi)$, equipped with weak$^*$ topology, is a compact convex set.

\medskip

\noindent {\bf Remarks. (i) }
Suppose that $X$ (resp. $Y$) is a compact Hausdorff space, $\cl A = C(X)$ (resp. $\cl B = C(Y)$), and let 
$\mu$ (resp.\ $\nu$) be a Borel probability measure on $X$ (resp. $Y$). 
Viewing $\mu$ (resp. $\nu$) as a state on $\cl A$ (resp.\ $\cl B)$, we see that the elements of 
$\bist(\mu,\nu)$ are precisely the couplings of the measures $\mu$ and $\nu$ in terms of 
the theory of optimal transport (see \cite[Definition 1.1]{villani}). 

\smallskip

{\bf (ii) } 
Specialising further, let $\cl A$ and $\cl B$ coincide with the algebra $\cl D_n$ of all diagonal matrices in $M_n$
(where $n\in \bb{N}$). 
Recall that a matrix $\Lambda = (\lambda_{i,j})_{i,j=1}^n \in M_n$ is called \emph{bistochastic} if 
$$\lambda_{i,j}\geq 0 \ \mbox{ and } \ \sum_{j'=1}^n \lambda_{i,j'} = \sum_{i'=1}^n \lambda_{i',j} = 1, \ \ \ i,j = 1,\dots,n.$$
In view of the canonical (algebraic) identification $\cl D_n\otimes \cl D_n \equiv M_n$, we can thus refer to an 
element of $\cl D_n\otimes \cl D_n$ being bistochastic. 
If $\sigma \in (\cl D_n\otimes\cl D_n)^*$, there exists a (unique) $A_{\sigma}\in \cl D_n\otimes\cl D_n$ such that 
$$\sigma(T) = {\rm tr}_{n^2}(TA_{\sigma}), \ \ \ T\in \cl D_n\otimes\cl D_n.$$
It is straightforward to verify that $\sigma\in \bist({\rm tr},{\rm tr})$ if and only if the matrix 
$\frac{1}{n} A_\sigma$ is bistochastic.

\medskip

Let $\cl A$ and $\cl B$ be unital C*-algebras.
We have that $\cl A \otimes 1\subseteq \cl A\otimes \cl B$ as C*-algebras, and hence 
$$\cl A^{**}\otimes 1 = (\cl A\otimes 1)^{**}
\subseteq (\cl A\otimes \cl B)^{**}$$
as von Neumann algebras. Similarly, 
$1\otimes \cl B^{**}\subseteq (\cl A\otimes\cl B)^{**}$. 
By \cite[Proposition 9.2.1]{bo}, 
the two von Neumann subalgebras 
$\cl A^{**}\otimes 1$ and $1\otimes \cl B^{**}$ of 
$(\cl A\otimes\cl B)^{**}$ mutually commute and 
there exists a canonical separately weak* continuous
embedding 
$$\cl A^{**}\odot \cl B^{**} \subseteq (\cl A\otimes\cl B)^{**}.$$
In particular, we can consider 
$\cl A^{**}\otimes 1 + 1\otimes \cl B^{**}$ as
an operator subsystem of 
$(\cl A\otimes\cl B)^{**}$. 
The latter identification will be made throughout the rest of the paper.

For a unital C*-algebra $\cl A$ and a state $\phi$ on $\cl A$, we will refer to the pair $(\cl A,\phi)$ as a \emph{measured C*-algebra}. 
(The motivation for the terminology comes from the commutative case $\cl A = C(X)$, where $X$ is a compact Hausdorff space and the fact that, in this case, states on $\cl A$ correspond canonically to Borel probability measures on $X$.)

\begin{definition}\label{def:capac}
Let $(\cl A, \phi)$ and $(\cl B, \psi)$ be measured $C^*$-algebras.
For $T\in\cl (\cl A\otimes\cl B)^{**}_+$ with $\|T\|\leq 1$, let 
$$\alpha(T) = \sup\{\sigma(T) : \sigma\in \bist(\phi,\psi)\},$$
\begin{equation}\label{eq_beta}
\beta(T) = \inf\{\phi(a) + \psi(b) : a\in \cl A^{**}_+, b\in \cl B^{**}_+, \ T\leq a\otimes 1 + 1\otimes b\},
\end{equation}
and 
$$\gamma(T) = \inf\{\phi(p) + \psi(q) : p\in\cl P(\cl A^{**}), q\in\cl P(\cl B^{**}), T\leq (p\otimes 1)\vee (1\otimes q)\}.$$
\end{definition}

We will refer to $\alpha(T)$ (resp. $\gamma(T)$) 
as the \emph{coupling capacity} (resp. the \emph{projective coupling capacity})
of $(\phi,\psi)$ with respect to $T$.

\begin{remark}\label{r_less}
\rm
{\bf (i)} 
By the compactness of the set $\bist(\phi,\psi)$ in the weak* topology, 
the supremum in the definition of $\alpha(T)$ is achieved if $T \in \cl A \otimes \cl B$.

\smallskip

{\bf (ii)}
Let 
$$\tilde{\bist}(\phi,\psi) = \left\{\sigma\in (\cl A\otimes\cl B)^*_+ : 
\sigma_{\cl A}\leq \phi \mbox{ and } \sigma_{\cl B}\leq \psi\right\}.$$
For $T \in\cl (\cl A\otimes\cl B)^{**}_+$, we have that
\begin{equation}\label{eq_biso}
\alpha(T) = \sup\left\{\sigma(T) : \sigma\in \tilde{\bist}(\phi,\psi)\right\}.
\end{equation}
Indeed, letting $\alpha'(T)$ denote the right hand side of (\ref{eq_biso}), we trivially have 
$\alpha(T)\leq \alpha'(T)$. 
Suppose that $\sigma\in \tilde{\bist}(\phi,\psi)$. 
Then $\sigma(1) = \sigma_{\cl A}(1) \leq \phi(1) = 1$.
Let $\phi' = \phi - \sigma_{\cl A}$ and $\psi' = \psi - \sigma_{\cl B}$; then 
$\phi'$ and $\psi'$ are positive functionals. 
If $\sigma(1) = 1$ then 
$$\phi'(1) = \phi(1) - \sigma(1\otimes 1) = 0$$
and hence $\phi' = 0$, that is, $\sigma_{\cl A} = \phi$;
similarly, $\sigma_{\cl B} = \psi$, that is, $\sigma\in \bist(\phi,\psi)$. 
We may hence assume that $\sigma(1) < 1$. 
Set
$$\sigma' = \sigma + \frac{1}{1 - \sigma(1)}\phi'\otimes \psi';$$
for $a\in \cl A$ we then have 
$$\sigma'(a\otimes 1) 
= 
\sigma(a\otimes 1) + \frac{1}{1 - \sigma(1)}\phi'(a)\psi'(1)
= 
\sigma(a\otimes 1) + \phi'(a) = \phi(a),$$
that is, $\sigma'_{\cl A} = \phi$; similarly, $\sigma'_{\cl B} = \psi$, that is, $\sigma'\in \bist(\phi,\psi)$. 
In addition, $\sigma \leq \sigma'$ and hence 
$\alpha'(T)\leq \alpha(T)$, establishing (\ref{eq_biso}).
\end{remark}



\subsection{A Monge-Kantorovich type duality} \label{ss_monge}

The purpose of this subsection is to identify the relations between the parameters $\alpha$, $\beta$ and 
$\gamma$. 
As a motivating example, consider the special case where 
$\cl A = \cl B = \cl D_n$, equipped with normalised trace $\tr$. 
As pointed out in Remark (ii) after Definition \ref{d_coup}, up to rescaling,
the elements in $\bist(\tr, \tr)$ correspond to bistochastic matrices.
Using the Birkhoff-von Neumann Theorem, it is straightforward to see that 
$\alpha(E) = \gamma(E)$ for every projection $E$ in $\cl D_n \otimes \cl D_n$. 
(In fact, one can easily verify that 
both $\alpha(E)$ and $\gamma(E)$ are equal to the normalised length of a maximal partial graph of a (partial) bijection, contained in $E$.)

We begin with a general min-max result regarding the state extensions. After the first version of this article was announced, Michael Hartz kindly pointed out to us that a very similar result is contained in \cite[Proposition 6.2]{a2}.

\begin{lemma}\label{l_ex}
Let $\cl C$ be a unital C*-algebra and 
$\cl S \subseteq \cl C$ be an operator subsystem. 
For $\tau \in S(\cl S)$, let ${\rm Ext}(\tau) = \left\{\omega \in S(\cl C): \omega|_{\cl S} = \tau\right\}$. 
Then, for any hermitian element $x\in \cl C$, we have 
\begin{equation}\label{eq_ome}
\sup \left\{\omega(x) : \omega \in {\rm Ext}(\tau)\right\} = \inf \{\tau(y) : y \in \cl S_h, y \geq x\}.
\end{equation}
\end{lemma}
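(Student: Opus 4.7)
The plan is to prove this by a Hahn-Banach separation argument applied to a suitable sublinear functional on the real Banach space $\cl C_h$ of hermitian elements of $\cl C$.

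The easy inequality $\le$ in (\ref{eq_ome}) is immediate: for any $\omega \in {\rm Ext}(\tau)$ and $y \in \cl S_h$ with $y\ge x$, positivity of $\omega$ gives $\omega(x) \le \omega(y) = \tau(y)$, after which one takes sup on the left and inf on the right.

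For the reverse inequality, first I would introduce the functional $p : \cl C_h \to \bb{R}$ defined by
\[
p(z) \;=\; \inf\{\tau(y) : y\in \cl S_h,\ y\ge z\}.
\]
Since $1\in \cl S$ and $\|z\|\cdot 1 \ge z$, the set in the definition is nonempty and $p(z) \le \|z\|\,\tau(1) = \|z\|$, so $p$ is finite. A routine check shows $p$ is positively homogeneous and subadditive (if $y_1 \ge z_1$ and $y_2 \ge z_2$ with $y_i \in \cl S_h$, then $y_1+y_2 \ge z_1+z_2$). Moreover, $p(y) = \tau(y)$ for every $y \in \cl S_h$: the inequality $p(y)\le \tau(y)$ uses $y\ge y$, while the reverse uses that $y'\ge y$ and $y'-y\in\cl S_h\cap\cl C_+$ imply $\tau(y')\ge \tau(y)$.

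Next, I would invoke the real Hahn-Banach theorem: there exists an $\bb{R}$-linear functional $\omega_0 : \cl C_h \to \bb{R}$ with $\omega_0 \le p$ on $\cl C_h$ and $\omega_0(x) = p(x)$. From $\omega_0 \le p$ one deduces $\omega_0|_{\cl S_h} = \tau|_{\cl S_h}$ (apply $\omega_0(y)\le p(y) = \tau(y)$ and $-\omega_0(y)=\omega_0(-y)\le p(-y) = \tau(-y) = -\tau(y)$ for $y\in\cl S_h$). For positivity, given $z\in\cl C_+$ note that $0\in\cl S_h$ and $0\ge -z$, so $p(-z)\le \tau(0)=0$; hence $-\omega_0(z) = \omega_0(-z)\le p(-z)\le 0$, i.e.\ $\omega_0(z)\ge 0$. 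Complexifying $\omega_0$ to $\omega(a+ib) := \omega_0(a) + i\omega_0(b)$ for $a,b\in\cl C_h$ yields a positive linear functional on $\cl C$ with $\omega(1) = \tau(1) = 1$, hence a state, and by construction $\omega\in{\rm Ext}(\tau)$ with $\omega(x) = p(x)$. This gives the right-hand side of (\ref{eq_ome}) as an attained value of the left-hand side, completing the proof.

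The only nontrivial step is the verification that the Hahn-Banach extension is automatically positive and agrees with $\tau$ on $\cl S$; this is where the particular choice of $p$ does the essential work. The construction is a standard dominated-extension technique (closely related to the proof of the Hahn-Banach-Kantorovich theorem and to Arveson's extension results), which is presumably why Hartz pointed to \cite[Proposition 6.2]{a2} as a precursor.
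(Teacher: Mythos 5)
Your proof is correct, but it is organised differently from the one in the paper. The paper extends $\tau$ only to the one extra dimension $\cl T=\cl S+\bb{C}x$, by decreeing $\tau'(y+\lambda x)=\tau(y)+\lambda t$ with $t$ the right-hand side of (\ref{eq_ome}), verifies positivity of $\tau'$ by a case analysis on the sign of $\lambda$ (the case $\lambda>0$ uses the nonemptiness of ${\rm Ext}(\tau)$, i.e.\ the Krein extension theorem, as an input), and then invokes the state extension theorem for operator systems to pass from $\cl T$ to $\cl C$. You instead package the right-hand side of (\ref{eq_ome}) into the sublinear functional $p(z)=\inf\{\tau(y):y\in\cl S_h,\ y\geq z\}$ on $\cl C_h$ and apply the real Hahn--Banach theorem once, reading off positivity, agreement with $\tau$ on $\cl S_h$, and the value at $x$ all from the single domination $\omega_0\leq p$. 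Your route is more self-contained: it does not presuppose ${\rm Ext}(\tau)\neq\emptyset$ (indeed it reproves the Krein extension theorem en route) and it avoids the sign case analysis; the paper's route keeps the extension-theorem machinery as a black box and exhibits the extremal value $t$ more explicitly. One small point you should make explicit: Hahn--Banach requires $p$ to be real-valued, and you only check $p(z)\leq\|z\|$; the lower bound follows from $0=p(0)\leq p(z)+p(-z)$, whence $p(z)\geq -p(-z)\geq -\|z\|$. This is routine and does not affect the validity of the argument.
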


\begin{proof}
Let $t_0$ (resp. $t$) denote the left (resp. right) hand side of (\ref{eq_ome}). 
If $y \in \cl S_h$, $x \leq y$, and $\omega \in {\rm Ext}(\tau)$, 
then $\omega(x)\leq \omega(y) = \tau(y)$, so 
$t_0\leq t$. 

If $x\in \cl S$, then both sides of (\ref{eq_ome}) are equal to 
$\tau(x)$, so we may assume that $x \notin \cl S$.
Consider the subspace $\cl T := \cl S + \mathbb{C}x$ and define a linear functional 
$\tau' : \cl T \to \mathbb{C}$ by letting
\[ \tau'(y + \lambda x) = \tau(y) + \lambda t, \ \ \  y \in \cl S, \ \lambda \in \mathbb{C}.\]
The fact that $\tau'$ is well-defined is a consequence of the fact that $x \notin \cl S$; in addition, $\tau'$ is clearly unital. 

Suppose that $z = y + \lambda x \in \cl T_+$; then $\lambda\in \bb{R}$ and $y\in \cl S_h$. 
We will show that $\tau'(z) \geq 0$. Assume first that $\lambda < 0$. 
Then $(-\lambda)^{-1} y \geq x$, so $\tau((-\lambda)^{-1} y) \geq t$, and hence 
$$\tau'(z) = \tau(y) + \lambda t = -\lambda\left(\tau((-\lambda)^{-1} y) - t\right) \geq 0.$$
If $\lambda > 0$ then, for any $\omega \in {\rm Ext}(\tau)$, we have that 
$$\tau\left(\frac{y}{\lambda}\right) + \omega(x) = \omega\left(\frac{y}{\lambda}\right) + \omega(x) 
= \frac{1}{\lambda} \omega(z) \geq 0.$$ 
Thus
$\tau(\frac{y}{\lambda}) + t_0 \geq 0$. By the first part of the proof, 
$\tau(\frac{y}{\lambda}) + t \geq 0$; this implies that $\tau'(z)\geq 0$. 
Finally, for $\lambda = 0$ the fact that $\tau'(z) \geq 0$ is trivial. 
Thus $\tau'$ is a positive functional. Extend $\tau'$ to a state $\widetilde{\tau}$ on $\cl C$. 
We have that $\widetilde{\tau}\in {\rm Ext}(\tau)$ and that $\widetilde{\tau}(x) = t$. It follows that $t\leq t_0$, completing the proof.
\end{proof}

\begin{lemma}\label{l_spand}
Let $H$ be a Hilbert space and $P$ and $Q$ be projections on $H$. 
\begin{itemize}
\item[(i)]
If $PQ = QP$ and 
$T$ is a positive contraction on $H$, then
$T\leq P + Q$ if and only if $T\leq P \vee Q$.

\item[(ii)]
If $rP \leq Q$ for some $r > 0$ then $P\leq Q$. 
\end{itemize}
\end{lemma}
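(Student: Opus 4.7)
For part (i), the plan is to exploit that commuting projections generate an abelian C*-algebra in which the identity $P + Q = P\vee Q + P\wedge Q$ holds, and in particular $P\vee Q \leq P + Q$. This immediately gives the ``if'' direction. For the ``only if'' direction, I would use the complementary identity $1 - P\vee Q = (1-P)(1-Q)$ that holds when $P$ and $Q$ commute. Writing $R = P\vee Q$ and compressing the inequality $T\leq P + Q$ by $1 - R$, I get
\[
0 \leq (1-R)T(1-R) \leq (1-R)(P+Q)(1-R) = 0,
\]
since $(1-R)(1-P)(1-Q)$ absorbs both $P$ and $Q$. Positivity of $T$ then forces $T^{1/2}(1-R) = 0$, hence $T = RT = TR = RTR$. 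Finally, since $\|T\|\leq 1$ and $R$ is a projection, $RTR \leq R$ by the elementary estimate $\langle RTRv,v\rangle \leq \|Rv\|^2 = \langle Rv,v\rangle$.

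For part (ii), the strategy is to show directly that $(1-Q)P(1-Q) = 0$, from which $P(1-Q) = 0$ and hence $P\leq Q$ follow. On one hand, since $P$ is a projection, $P = P^*P$, so
\[
(1-Q)P(1-Q) = \bigl(P(1-Q)\bigr)^*\bigl(P(1-Q)\bigr) \geq 0.
\]
On the other hand, compressing the hypothesis $rP \leq Q$ by $1 - Q$ yields
\[
r(1-Q)P(1-Q) \leq (1-Q)Q(1-Q) = 0,
\]
so $(1-Q)P(1-Q)\leq 0$ as well. Combining gives the desired equality.

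The proofs are essentially one-line positivity arguments once the right compression is chosen. The only minor subtlety is checking, in part (i), that all steps are valid for a general positive contraction $T$ (not assumed to commute with $P$ or $Q$); the compression by $1-R$ and the norm estimate $RTR\leq\|T\|R$ handle this cleanly. No genuine obstacle is anticipated.
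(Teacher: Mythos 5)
Your proof is correct and follows essentially the same route as the paper: in (i) you show that $T\leq P+Q$ forces $T$ to vanish on $\mathrm{ran}(P\vee Q)^{\perp}$ and then use $\|T\|\leq 1$ to conclude $T\leq P\vee Q$, and in (ii) you show $PQ^{\perp}=0$; the paper does exactly this, only phrased with inner products against specific vectors rather than your operator compressions by $1-R$ and $1-Q$. No gaps.
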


\begin{proof}
(i) 
Assume that $T\leq P + Q$,
suppose that $T^{1/2} (P\vee Q)^{\perp} \neq 0$ and let $\xi\in H$ be such that
$T^{1/2}(P^{\perp}Q^{\perp})\xi \neq 0$. Set $\eta = (P^{\perp}Q^{\perp})\xi$;
then $(P\eta,\eta) = (Q\eta,\eta) = 0$ but
$0\neq \|T^{1/2}\eta\|^2 = (T\eta,\eta)$, a contradiction with the assumption that
$T\leq P + Q$.
It follows that $T^{1/2}(P\vee Q)^{\perp} = 0$ and hence 
$(P\vee Q)^{\perp}T = 0$, implying that ${\rm ran}(T)\subseteq {\rm ran}(P\vee Q)$. 
Since $\|T\|\leq 1$, the latter condition implies $T\leq P\vee Q$. 
The converse implication follows from the fact that $P\vee Q\leq P + Q$. 

(ii) 
For $\xi\in H$, we have
$$r\|PQ^{\perp}\xi\|^2
= (rPQ^{\perp}\xi,Q^{\perp}\xi)  \leq (QQ^{\perp}\xi,Q^{\perp}\xi) = 0,$$
showing that $PQ^{\perp} = 0$. Thus, $P\leq Q$.
\end{proof}

The second part of the following theorem is one of the key results of the paper.

\begin{theorem}\label{th_bg}
Let $(\cl A, \phi)$ and $(\cl B, \psi)$ be measured $C^*$-algebras and
$T\in (\cl A\otimes\cl B)^{**}$ be a positive contraction. Then 
$$\alpha(T) \leq \beta(T) \leq \gamma(T) \leq 1.$$
Furthermore if $T \in (\cl A \otimes \cl B)_+$ then $\alpha(T) = \beta(T)$.
\end{theorem}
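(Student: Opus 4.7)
The plan is to establish the three easy inequalities directly and then obtain the equality $\alpha(T) = \beta(T)$ for $T \in (\cl A \otimes \cl B)_+$ as a consequence of the state extension Lemma \ref{l_ex}, combined with a short spectral-shift argument.

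For the chain $\alpha(T) \le \beta(T) \le \gamma(T) \le 1$, the first inequality is immediate: if $\sigma \in \bist(\phi,\psi)$ and $a \in \cl A^{**}_+$, $b \in \cl B^{**}_+$ satisfy $T \le a \otimes 1 + 1 \otimes b$, then $\sigma(T) \le \sigma(a \otimes 1) + \sigma(1 \otimes b) = \phi(a) + \psi(b)$, and one takes the supremum over $\sigma$ and the infimum over $(a,b)$. For $\beta(T) \le \gamma(T)$, I would note that $p \otimes 1$ and $1 \otimes q$ commute, so Lemma \ref{l_spand}(i) applied to the positive contraction $T$ gives $T \le (p \otimes 1) \vee (1 \otimes q)$ if and only if $T \le p \otimes 1 + 1 \otimes q$; thus the infimum defining $\gamma(T)$ runs over a subset of those that define $\beta(T)$. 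The bound $\gamma(T) \le 1$ follows by taking $p = 1_{\cl A^{**}}$, $q = 0$, since $(1 \otimes 1) \vee 0 = 1 \ge T$ and $\phi(1) + \psi(0) = 1$.

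For the key equality, I would apply Lemma \ref{l_ex} with $\cl C = \cl A \otimes \cl B$ and the operator subsystem $\cl S = \cl A \otimes 1 + 1 \otimes \cl B$. The first step is to verify that
\[ \tau(a \otimes 1 + 1 \otimes b) := \phi(a) + \psi(b), \quad a \in \cl A,\ b \in \cl B, \]
gives a well-defined state on $\cl S$: well-definedness follows from the observation that if $a \otimes 1 + 1 \otimes b = a' \otimes 1 + 1 \otimes b'$, then $(a-a') \otimes 1 = 1 \otimes (b'-b)$ forces $a - a' = \lambda 1$ and $b' - b = \lambda 1$ for some $\lambda \in \bb{C}$, so the sum $\phi(a) + \psi(b)$ is unchanged; positivity follows because $\phi \otimes \psi$ extends $\tau$ positively to $\cl C$, and $\tau(1) = 1$. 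By construction, a state $\omega$ of $\cl C$ extends $\tau$ precisely when $\omega_{\cl A} = \phi$ and $\omega_{\cl B} = \psi$, i.e., when $\omega \in \bist(\phi,\psi)$.

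Applying Lemma \ref{l_ex} to the hermitian element $T$ then yields
\[ \alpha(T) = \inf \{\phi(a) + \psi(b) : a \in \cl A_h,\ b \in \cl B_h,\ T \le a \otimes 1 + 1 \otimes b\}. \]
The last step is to bridge from hermitian to positive witnesses. If $a \in \cl A_h$, $b \in \cl B_h$ satisfy $a \otimes 1 + 1 \otimes b \ge T \ge 0$, then $a \otimes 1 + 1 \otimes b \ge 0$, so using the identity ${\rm spec}(a \otimes 1 + 1 \otimes b) = {\rm spec}(a) + {\rm spec}(b)$ (valid in any faithful representation, since the two summands commute) one obtains $\min{\rm spec}(a) + \min{\rm spec}(b) \ge 0$. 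Setting $t = -\min{\rm spec}(a)$, the shifted elements $a' := a + t\cdot 1 \in \cl A_+$ and $b' := b - t \cdot 1 \in \cl B_+$ satisfy $a' \otimes 1 + 1 \otimes b' = a \otimes 1 + 1 \otimes b$ and $\phi(a') + \psi(b') = \phi(a) + \psi(b)$. Hence the infimum above equals
\[ \inf\{\phi(a) + \psi(b) : a \in \cl A_+,\ b \in \cl B_+,\ T \le a \otimes 1 + 1 \otimes b\}, \]
which is an upper bound for $\beta(T)$ (as $\cl A_+ \subseteq \cl A^{**}_+$ and similarly for $\cl B$). Combined with $\alpha(T) \le \beta(T)$, this forces $\alpha(T) = \beta(T)$.

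The only real obstacle is the last step, namely ensuring the infimum produced by Lemma \ref{l_ex} -- which a priori ranges over hermitian witnesses in $\cl A$ and $\cl B$ -- dominates the infimum defining $\beta(T)$, which ranges over \emph{positive} witnesses in the larger algebras $\cl A^{**}$ and $\cl B^{**}$. The spectral-shift trick handles both issues simultaneously, since it stays within $\cl A$ and $\cl B$.
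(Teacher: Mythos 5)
Your proof follows essentially the same route as the paper's: the three inequalities are obtained exactly as in the paper (evaluating couplings on $a\otimes 1+1\otimes b$ for $\alpha\le\beta$, Lemma \ref{l_spand}(i) for $\beta\le\gamma$, the pair $(1,0)$ for $\gamma\le1$), and the equality $\alpha(T)=\beta(T)$ is derived from Lemma \ref{l_ex} applied to $\cl S=\cl A\otimes1+1\otimes\cl B$ with $\tau=\phi\otimes\psi|_{\cl S}$, followed by a scalar shift turning hermitian witnesses into positive ones.

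One justification, however, needs repair. You invoke the identity $\mathrm{sp}(a\otimes1+1\otimes b)=\mathrm{sp}(a)+\mathrm{sp}(b)$ and attribute it to the fact that ``the two summands commute''. Commutativity alone only yields the inclusion $\mathrm{sp}(a\otimes1+1\otimes b)\subseteq\mathrm{sp}(a)+\mathrm{sp}(b)$, hence $\min\mathrm{sp}(a\otimes1+1\otimes b)\ge\min\mathrm{sp}(a)+\min\mathrm{sp}(b)$, which is the wrong direction for your purposes: for a nontrivial projection $P$, the commuting hermitian operators $P$ and $\tfrac12 1-P$ have a positive sum while the minima of their spectra add up to $-\tfrac12$. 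What your shift argument actually requires is $\min\mathrm{sp}(a)+\min\mathrm{sp}(b)\ge0$, i.e.\ the reverse inequality at the bottom of the spectrum, and this is a genuinely tensor-product phenomenon rather than a consequence of commutativity. It is obtained by testing $a\otimes1+1\otimes b\ge T\ge0$ against product vectors $\xi_n\otimes\eta_n$ chosen so that $\langle a\xi_n,\xi_n\rangle\to\min\mathrm{sp}(a)$ and $\langle b\eta_n,\eta_n\rangle\to\min\mathrm{sp}(b)$ --- which is precisely the paper's argument. With that substitution (and the routine remark that every hermitian element of $\cl S$ can be rewritten as $a\otimes1+1\otimes b$ with $a,b$ themselves hermitian without changing $\phi(a)+\psi(b)$), your proof is complete.
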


\begin{proof}
Let $T \in (\cl A\otimes\cl B)^{**}$ be as above. It is easy to see that if $\sigma \in \bist(\phi,\psi)$ and $a\in \cl{A}^{**}$, $b \in \cl{B}^{**}$, then
$\sigma(a \otimes 1 + 1 \otimes b) = \phi(a) + \psi(b)$. This immediately shows that $\sigma(T) \leq \phi(a) + \psi(b)$ whenever $T \leq a \otimes 1 + 1 \otimes b$, so that $\alpha(T) \leq \beta(T)$.

Restricting the right hand side of (\ref{eq_sae}) to projections $p = a$ and $q = b$, 
an application of Lemma \ref{l_spand} (i) shows that  $\beta(T)\leq \gamma(T)$.
Finally, since $p = 1$, $q = 0$ gives a feasible choice for the projections $p$ and $q$, 
we have that $\gamma(T) \leq 1$. 

Assume now that $T \in (\cl A \otimes \cl B)_+$ and in Lemma \ref{l_ex} set $\cl C :=\cl A\otimes \cl B$ and
$\cl S := \cl A\otimes 1 + 1\otimes \cl B$, equipped with the state 
$\tau := \phi\otimes\psi|_{\cl S}$. 
Note that if $a\in \cl A$ then $a\otimes 1\in \cl S$ and
$\tau(a\otimes 1) = \phi(a)$; similarly, if $b\in \cl B$ then $\tau(1\otimes b) = \psi(b)$.
By Lemma \ref{l_ex}, 
\begin{eqnarray}\label{eq_sae}
	&& \alpha(T) 
	=
	\inf\{\phi(a) + \psi(b) : a\otimes 1 + 1\otimes b \in 
	(\cl A\otimes 1 + 1\otimes \cl B)_h,\\
	& & 
	\hspace{7.8cm} 
	T\leq a\otimes 1 + 1\otimes b\}.\nonumber
\end{eqnarray}
The condition 
$a\otimes 1 + 1\otimes b \in (\cl A\otimes 1 + 1\otimes \cl B)_h$ implies that 
$a\otimes 1 + 1\otimes b = \frac{a+a^*}{2} \ot 1 + 1 \ot \frac{b+b^*}{2}$ and therefore that 
\begin{eqnarray*}
	\phi(a) + \psi(b) 
	& = & 
	\tau(a\otimes 1 + 1\otimes b) 
	= 
	\tau\left(\frac{a+a^*}{2} \ot 1 + 1 \ot \frac{b+b^*}{2}\right)\\
	& = & 
	\phi\left(\frac{a+a^*}{2}\right) + \psi\left(\frac{b+b^*}{2}\right).
\end{eqnarray*}
It follows that the elements $a$ and $b$ on the right hand side of (\ref{eq_sae}) can be assumed hermitian. 

Assume that $a\in\cl A_h$ and $b\in\cl B_h$ are such that
$T\leq a\otimes 1 + 1\otimes b$.
We claim that, without loss of generality, the elements $a$ and $b$ can be assumed positive. 
Write ${\rm sp}(x)$ for the spectrum of $x$ and let 
$s = \min{\rm sp(a)}$ and $t = \min{\rm sp(b)}$.
If $\min\{s,t\} \geq 0$ then $a$ and $b$ are positive and the claim is vacuous. 
Suppose that $\min\{s,t\} < 0$, say $s < 0$. 
Let $(\xi_n)_{n\in\bb{N}}$ (resp. $(\eta_n)_{n\in\bb{N}}$) be a 
sequence of unit vectors in the Hilbert space $H_{\cl A}$ (resp. $H_{\cl B}$) of the faithful representation of $\cl A$
(resp. $\cl B$) such that 
$$s = \lim_{n\to \infty} \langle a\xi_n,\xi_n\rangle \ \ (\mbox{resp. } 
t = \lim_{n\to \infty} \langle b\eta_n,\eta_n\rangle).$$
As 
\begin{eqnarray*}
	0 
	& \leq & 
	\langle T(\xi_n\otimes\eta_n),\xi_n\otimes\eta_n\rangle\\
	& \leq &  
	\langle (a\otimes 1)(\xi_n\otimes\eta_n),\xi_n\otimes\eta_n\rangle + \langle (1\otimes b)(\xi_n\otimes\eta_n),\xi_n\otimes\eta_n\rangle\\
	& = & 
	\langle a\xi_n,\xi_n\rangle + \langle b\eta_n,\eta_n\rangle,
\end{eqnarray*}
we have that $s + t\geq 0$. 
Let $a' = a - s1$ and $b' = b + s1$. Then $a'\geq 0$ and
$b' \geq b - t1 \geq 0$. 
On the other hand, trivially, 
$$a\otimes 1 + 1\otimes b = a'\otimes 1 + 1\otimes b' \ \mbox{ and } \ 
\phi(a) + \psi(b) = \phi(a') + \psi(b').$$ 
We have shown that the elements $a$ and $b$ in (\ref{eq_sae}) can be assumed to be positive, and 
combined with the first paragraph, this implies that 
$\alpha(T) = \beta(T)$.

\end{proof}

\begin{remark}
\rm
It is natural to ask whether the equality $\alpha(T) = \beta(T)$ can be extended beyond elements of $(\cl A \otimes \cl B)_+$. An instance where this is true can be seen in Proposition \ref{prop:lowerscts} below; the general case remains open.	
\end{remark}

It is clear that if $T\in (\cl A\otimes\cl B)^{**}$ is a positive contraction and $E$ is its range projection then 
$\gamma(T) = \gamma(E)$. It is therefore natural to restrict attention to the values of the parameter $\gamma$
on the projections in $(\cl A\otimes\cl B)^{**}$ alone.  
As we next note, the inequality $\beta(T) \leq \gamma(T)$ can be strict even for $T \in \cl A \otimes \cl B$. 
We will need a special case of the following proposition which, at the same time, exhibits a 
case, where an equality between $\beta$ and $\gamma$ takes place.

\begin{proposition}\label{p_papb}
Let $(\cl A,\phi)$ and $(\cl B,\psi)$ be measured C*-algebras, and 
$e\in \cl P(\cl A^{**})$ and $f\in \cl P(\cl B^{**})$. Then 
\begin{equation}\label{eq_genef}
\gamma(e\otimes f) = \min\{\phi(e),\psi(f)\}.
\end{equation}
If, in addition, $\cl A = M_n$ and $\cl B = M_m$ for some $n, m \in \mathbb{N}$, 
and $\phi = \tr_n$ and $\psi = \tr_m$, then 
\begin{equation}\label{eq_finef}
\beta(e\otimes f) = \gamma(e\otimes f) = \min\{\phi(e),\psi(f)\}.
\end{equation}
\end{proposition}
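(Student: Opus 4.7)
The plan is to treat the two identities in turn. For the general formula $\gamma(e\otimes f)=\min\{\phi(e),\psi(f)\}$, the upper bound is immediate by plugging $(p,q)=(e,0)$ and $(p,q)=(0,f)$ into the definition of $\gamma$. The matching lower bound rests on the observation that $p\otimes 1$ and $1\otimes q$ are commuting projections in $(\cl A\otimes \cl B)^{**}$, so
\begin{equation*}
(p\otimes 1)\vee (1\otimes q) = 1 - (1-p)\otimes (1-q).
\end{equation*}
Thus the constraint $e\otimes f\leq (p\otimes 1)\vee (1\otimes q)$ is equivalent to the orthogonality relation $e(1-p)\otimes f(1-q)=0$ inside $(\cl A\otimes \cl B)^{**}$. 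Since the embedding $\cl A^{**}\odot \cl B^{**}\subseteq (\cl A\otimes \cl B)^{**}$ recalled in the introduction is injective, and since an elementary tensor vanishes only if one of its factors does, this forces either $e\leq p$ or $f\leq q$. In both cases $\phi(p)+\psi(q)\geq \min\{\phi(e),\psi(f)\}$, which completes the first identity.

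For the matrix refinement, note that $M_n^{**}=M_n$, so $e\otimes f\in \cl A\otimes \cl B$ and Theorem \ref{th_bg} delivers $\alpha(e\otimes f)=\beta(e\otimes f)$. Combined with the chain $\alpha\leq \beta\leq \gamma$ and the first part, it suffices to exhibit a $\sigma\in \bist(\tr_n,\tr_m)$ with $\sigma(e\otimes f)\geq \min\{\tr_n(e),\tr_m(f)\}$. Assume without loss of generality that $r:=\rank e$ and $s:=\rank f$ satisfy $r/n\leq s/m$, and choose orthonormal bases $\xi_1,\dots,\xi_n$ of $\bb{C}^n$ and $\eta_1,\dots,\eta_m$ of $\bb{C}^m$ adapted to $e$ and $f$ respectively. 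The classical Kantorovich coupling obtained by partitioning $[0,1]$ into $n$ and $m$ equal subintervals $I_i$, $J_j$ and setting $p_{ij}=|I_i\cap J_j|$ satisfies $p_{ij}\geq 0$, $\sum_j p_{ij}=1/n$, $\sum_i p_{ij}=1/m$, and $\sum_{i\leq r,\,j\leq s}p_{ij}=\min(r/n,s/m)$. The state
\begin{equation*}
\sigma(T):=\sum_{i,j}p_{ij}\,\langle T(\xi_i\otimes \eta_j),\xi_i\otimes \eta_j\rangle, \qquad T\in \cl A\otimes \cl B,
\end{equation*}
then has $\cl A$- and $\cl B$-marginals $\tr_n$ and $\tr_m$ by the row and column sum conditions, and one computes $\sigma(e\otimes f)=\sum_{i\leq r,\,j\leq s}p_{ij}=\min(r/n,s/m)$, as required.

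The genuinely substantive step is the lattice identity together with the injectivity-of-tensor-products argument in the first part; the matrix statement is then an essentially formal consequence of the duality $\alpha=\beta$ from Theorem \ref{th_bg} and the explicit classical coupling above.
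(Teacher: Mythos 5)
Your proof is correct and follows essentially the same route as the paper: the upper bound for $\gamma$ via $(p,q)=(e,0)$ or $(0,f)$, the lower bound via $(ep^{\perp})\otimes(fq^{\perp})=0$ forcing $e\leq p$ or $f\leq q$, and in the matrix case the reduction (through $\alpha\leq\beta\leq\gamma$) to exhibiting an explicit classical coupling of the uniform distributions placing mass $\min(r/n,s/m)$ on the corner block. The only difference is cosmetic: you realise that coupling via intersections of equal subintervals of $[0,1]$, whereas the paper writes down the joint distribution by an explicit case-by-case formula; both yield the same diagonal density matrix argument.
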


\begin{proof}
Since $e\otimes f \leq e \otimes 1$ and $e\otimes f\leq 1 \otimes f$,
we have that $\gamma(e\otimes f) \leq \min\{\phi(e),\psi(f)\}$.
On the other hand, assume that $p$ and $q$ are projections with
$e\otimes f\leq (p\otimes 1)\vee (1\otimes q)$. Then
$(e\otimes f)(p^{\perp}\otimes q^{\perp}) = 0$ and hence
either $e\leq p$ or $f\leq q$. This implies that $\phi(p) + \psi(q) \geq \min\{\phi(e),\psi(f)\}$
and (\ref{eq_genef}) is established. 

Proceeding to the justification of (\ref{eq_finef}), 
in view of Theorem \ref{th_bg} and (\ref{eq_genef}), it suffices to show that 
$\min\{\phi(e),\psi(f)\} \leq \alpha(e\otimes f)$. 
Choose orthonormal bases $(e_1,\ldots,e_k)$ (resp. $(f_1,\ldots,f_l)$) of 
the ranges of $e$ (resp. $f$), 
and complete it to a basis of $\mathbb{C}^n$ (resp. $\mathbb{C}^m$). 
Assume, say, that $\frac{k}{n} \leq \frac{l}{m}$. 
Let $\pi$ be the probability distribution on $\{1,\ldots, n\}\times \{1,\ldots, m\}$, given by 
$$
\pi(i,j) = 
\begin{cases}
\frac{1}{nl} & \text{if } i\leq k \mbox{ and } j\leq l,\\
0 & \text{if } i\leq k \mbox{ and }  j > l, \\
\frac{1}{l(n-k)}\left(\frac{l}{m}-\frac{k}{n}\right) & \text{if } i > k \mbox{ and } j\leq l,\\
\frac{1}{m(n-k)} & \text{if } i > k \mbox{ and } j > l.
\end{cases}
$$
Then the marginals of $\pi$ coincide with the uniform distributions and 
$$\pi(\{1,\ldots, k\}\times \{1,\ldots, l\}) = \frac{k}{n}.$$
Let
\[ D = \sum_{i=1}^n \sum_{j=1}^m \pi(i,j) e_ie_i^* \ot f_jf_j^*.\]
It is then easy to check that the state on $M_n\otimes M_m$ 
with density matrix $D$ belongs to $\mathsf{C}(\tr_n, \tr_m)$, and 
$\tr (D(e\otimes f)) = \frac{k}{n}$. Thus $\min\{\phi(e),\psi(f)\}=\frac{k}{n} \leq \alpha(e \ot f)$ 
and the proof is complete.
\end{proof}

\begin{remark}\label{r_gammadif}
\rm 
The inequality $\beta(E) \leq \gamma(E)$ in Theorem \ref{th_bg}, for projections $E\in  \cl A\otimes\cl B$, 
can be strict. 
Indeed, let $\cl A = \cl B = M_2$ and 
$\phi = \psi$ coincide with the vector state $\omega_{\xi}$
corresponding to the vector $\xi=\frac{1}{\sqrt{2}}\begin{pmatrix}1\\1\end{pmatrix}$.
Let $p$ be the rank one projection with range the subspace generated by the 
vector $e_1=\begin{pmatrix}1\\0\end{pmatrix}$, and $E = p\otimes p$.
By Proposition \ref{p_papb}, $\gamma(E) = \frac{1}{2}$. 

We claim that $\beta(E)= \frac{1}{4}$.
Indeed, suppose that $\omega \in S(M_2 \ot M_2)$ is an element of $\tilde{\bist}(\omega_\xi,\omega_\xi)$.
Writing $p_\xi$ for the projection onto $\bb{C}\xi$, 
this means that $\omega(p_{\xi}^\perp\ot 1) = \omega(1 \ot p_\xi^\perp) = 0$.
Now an elementary calculation shows that the density matrix $A_{\omega}$ of $\omega$
has rank one. 
Since ${\rm tr}(A_{\omega})=1$, we conclude that $\omega = \omega_\xi \ot \omega_\xi$. 
This implies that $\alpha(E) = \omega_\xi (p)^2= \frac{1}{4}$.

The state $\omega_\xi$ above is not faithful, but considering instead of this a state of the form 
$(1- \eps) \omega_\xi + \eps \omega_\eta$, say with $\eta = \frac{1}{\sqrt{2}}\begin{pmatrix}1\\-1\end{pmatrix}$, and using once again Proposition \ref{p_papb} and the upper semicontinuity of Proposition \ref{prop:upper} (i) we can see that the inequality $\beta(E) \leq \gamma(E)$ may be strict even for faithful states.
\end{remark}

We next exhibit another situation, where 
an equality between the parameters $\beta$ and $\gamma$ takes place.

\begin{proposition}\label{p_ifcom}
Let $E\in (\cl A\otimes \cl B)^{**}$ be a projection. Suppose that the infimum in the definition of $\beta$ 
(see (\ref{eq_beta})) is achieved at a pair $(a,b)$ such that $E(a\otimes 1) = (a\otimes 1)E$ and 
$E(1\otimes b) = (1\otimes b)E$. Then $\beta(E) = \gamma(E)$. 
\end{proposition}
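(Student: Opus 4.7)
The plan is to establish $\gamma(E) \leq \beta(E)$, as the reverse inequality is Theorem~\ref{th_bg}. Let $(a,b) \in \cl A^{**}_+ \times \cl B^{**}_+$ be a pair realising the infimum in (\ref{eq_beta}), so that $E \leq a \otimes 1 + 1 \otimes b$, $\phi(a) + \psi(b) = \beta(E)$, and $E$ commutes with both $a \otimes 1$ and $1 \otimes b$. Since $\cl A^{**} \otimes 1$ and $1 \otimes \cl B^{**}$ mutually commute, the three operators $E,\, a \otimes 1,\, 1 \otimes b$ are pairwise commuting and generate an abelian von Neumann subalgebra $\cl M \subseteq (\cl A \otimes \cl B)^{**}$. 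Represent $\cl M$ as $L^\infty(X,\mu)$, under which $E, a \otimes 1, 1 \otimes b$ become $\chi_A, f, g$ respectively, where $A \subseteq X$ is measurable, $f, g \geq 0$, and $\chi_A \leq f + g$ $\mu$-a.e.

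Next I would reduce to the case $0 \leq a \leq 1$ and $0 \leq b \leq 1$. Applying continuous functional calculus with the function $s \mapsto \min(s,1)$, replace $a$ by $a' := \min(a,1)$ and $b$ by $b' := \min(b,1)$. Then $a' \otimes 1 = \min(a \otimes 1, 1) \in \cl M$ is a Borel function of $a \otimes 1$ and so still commutes with $E$; similarly for $b'$. A pointwise case analysis on $X$ gives $\chi_A \leq \min(f,1) + \min(g,1)$ a.e., so $E \leq a' \otimes 1 + 1 \otimes b'$ in $\cl M$, and hence in $(\cl A \otimes \cl B)^{**}$. Furthermore $\phi(a') + \psi(b') \leq \phi(a) + \psi(b) = \beta(E)$, and by optimality of $(a,b)$ equality holds; so from now on we assume $0 \leq a, b \leq 1$.

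For each $t \in [0,1]$, define the spectral projections $p_t := \chi_{[t,\infty)}(a) \in \cl P(\cl A^{**})$ and $q_t := \chi_{[1-t,\infty)}(b) \in \cl P(\cl B^{**})$; in $\cl M$ they correspond to $\chi_{\{f \geq t\}}$ and $\chi_{\{g \geq 1-t\}}$. On $A$, from $f + g \geq 1$ we deduce that $f \geq t$ or $g \geq 1-t$, so $A \subseteq \{f \geq t\} \cup \{g \geq 1-t\}$; since $p_t \otimes 1$ and $1 \otimes q_t$ commute, their join $(p_t \otimes 1) \vee (1 \otimes q_t) = p_t \otimes 1 + 1 \otimes q_t - p_t \otimes q_t$ coincides in $\cl M$ and in $(\cl A \otimes \cl B)^{**}$, yielding $E \leq (p_t \otimes 1) \vee (1 \otimes q_t)$. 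Consequently $\gamma(E) \leq \phi(p_t) + \psi(q_t)$ for every $t \in [0,1]$. Integrating over $[0,1]$ and using the layer-cake identity $\int_0^1 \chi_{[t,\infty)}(a)\, dt = a$, valid as $0 \leq a \leq 1$, together with the ultraweak continuity of $\phi$ on $\cl A^{**}$ (and analogously for $b$ and $\psi$), we obtain
$$\gamma(E) \leq \int_0^1 \bigl(\phi(p_t) + \psi(q_t)\bigr)\, dt = \phi(a) + \psi(b) = \beta(E),$$
completing the proof. The most delicate step is the reduction to $a, b \leq 1$: it relies on the Borel functional calculus preserving commutation with $E$, together with working inside $\cl M$ to check the pointwise inequality; once past this, the spectral layering argument is routine.
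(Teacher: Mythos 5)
Your proof is correct and follows essentially the same route as the paper's: normalise $a,b$ to contractions by functional calculus inside the abelian von Neumann algebra generated by $E$, $a\otimes 1$ and $1\otimes b$, produce for each $t\in[0,1]$ a feasible pair of level-set projections $(p_t,q_t)$ with $E\leq (p_t\otimes 1)\vee(1\otimes q_t)$, and average over $t$ to recover $\phi(a)+\psi(b)$. The only difference is one of implementation: the paper first discretises $a$ and $b$ into step functions and runs an $\varepsilon$-argument invoking Lemma~\ref{l_spand}, whereas you integrate the continuous family of spectral projections directly via the layer-cake identity, which is slightly cleaner but the same idea.
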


\begin{proof}
Let $\epsilon > 0$ and choose
$a\in \cl A^{**}_+$ and $b\in \cl B^{**}_+$
such that $E\leq a\otimes 1 + 1\otimes b$, 
$$E(a\otimes 1) = (a\otimes 1)E \ \mbox{ and } \ 
E(1\otimes b) = (1\otimes b)E,$$
and
$\phi(a) + \psi(b) = \beta(E)$.
Since the elements $a\otimes 1, 1\otimes b$ and $E$ are contained in a common abelian von Neumann algebra, 
by functional calculus, we can assume that 
$\|a\|\leq 1$ and $\|b\|\leq 1$. 
By the Spectral Theorem, there exist  families $(p_i)_{i=1}^n$
(resp. $(q_j)_{j=1}^m$) of mutually orthogonal projections in $\cl A^{**}$ (resp. $\cl B^{**}$) with sum $1$, such that 
$E$ commutes with the family $\{p_i\otimes 1, 1\otimes q_j\}_{i,j}$, 
and scalars $(\lambda_i)_{i=1}^n\in [0,1]$ (resp. $(\mu_j)_{j=1}^m \in [0,1]$) such that, if 
$$a' = \sum_{i=1}^n \lambda_i p_i \ \mbox{ and } \  b' = \sum_{j=1}^m \mu_j q_j,$$
then $a\leq a'$, $b\leq b'$ and
$\phi(a') + \psi(b')  < \beta(E) + \epsilon.$

Set $c = 1 - a'$ and $d = 1 - b'$, $c_i = 1 - \lambda_i$, $d_j = 1 - \mu_j$, $i=1,\ldots,n, j = 1,\ldots,m$. Then
\begin{equation}\label{eq_inverted}
c\otimes 1 + 1\otimes d - 1\otimes 1 \leq E^{\perp}.
\end{equation}
For each $t \in [0,1]$, let $p_t = \sum\{p_i : c_i > t\}$ and
$q_t = \sum\{q_j : d_j > 1 - t\}$.
We claim that 
\begin{equation}\label{eq_ptqt}
p_t\otimes q_t\leq E^{\perp} \ \ \mbox{ for every } t\in [0,1].
\end{equation}
To see this, note that
if $c_i > t$ and $d_j > 1 - t$ then $c_i + d_j - 1 > 0$ and write $F$ for the set of these pairs $(i,j)$ for which these inequalities hold.
By (\ref{eq_inverted}), 
$$\sum_{(i,j)\in F} (c_i + d_j - 1) p_i\otimes q_j \leq E^{\perp}.$$
Now Lemma \ref{l_spand} (ii) implies that $p_i\otimes q_j \leq E^{\perp}$ for every $(i,j) \in F$, and 
(\ref{eq_ptqt}) is proved.

Set $f(t) = \phi(p_t)$ and $g(t) = \psi(q_t)$, $t\in [0,1]$. 
It is straightforward to check that
$$\phi(c) + \psi(d) = \int_0^1 (f(t) + g(t))dt.$$
Since $\phi(c) + \psi(d) > 2 - \beta(E) - \epsilon$,
there exists $t_0\in [0,1]$ such that
$f(t_0) + g(t_0) >  2 - \beta(E) - \epsilon$.
Setting $p = 1 - p_{t_0}$ and $q = 1 - q_{t_0}$, we see that
$E\leq (p\otimes 1) + (1\otimes q)$. Lemma \ref{l_spand} (i) implies that 
$E\leq (p\otimes 1) \vee (1\otimes q)$. Since 
$$\phi(p) + \psi(q) = 2 - f(t_0) - g(t_0) < \beta(E) + \epsilon,$$ 
we have that $\gamma(E)\leq \beta(E)$ and hence, by Theorem \ref{th_bg}, 
$\beta(E) = \gamma(E)$. 
\end{proof}

\noindent {\bf Remarks. (i) } 
As a consequence of Proposition \ref{p_ifcom} and Remark \ref{r_gammadif}, we see that the infimum in the definition of $\beta(E)$, for a projection $E$, is not necessarily achieved on elements $a$, $b$ whose ampliations $a\otimes 1$ and $1\otimes b$ commute with $E$. 

\smallskip


{\bf (ii) } We note that the conclusion of Proposition \ref{p_ifcom} holds true under 
the weaker assumption which does not require that the infimum in the definition of $\beta$ is achieved, 
but that there exists a sequence of pairs $((a_k,b_k))_{k\in \bb{N}}$, 
such that for all $k\in \mathbb{N}$ we have $E(a_k\otimes 1) = (a_k\otimes 1)E$,
$E(1\otimes b_k) = (1\otimes b_k)E$, and 
$\phi(a_k) + \psi(b_k)\to_{k\to \infty} \beta(E)$. 
By functional calculus such pairs exist if $\cl A$ and $\cl B$ are commutative.


\subsection{Monotonicity and preservation} \label{ss_mon}

Let $(\cl A, \phi)$ and $(\cl B, \psi)$ be measured $C^*$-algebras, fixed throughout this subsection.

\begin{proposition}\label{p_monco}
\begin{itemize}
\item[(i)] 
If $(T_k)_{k\in \bb{N}}$ is a sequence in $(\cl A\otimes\cl B)_+$, $T \in(\cl A\otimes\cl B)_+$
and $T_k\to_{k\to\infty} T$ in the weak* topology, then $\alpha(T) \leq \liminf_{k\in \bb{N}} \alpha(T_k)$.
If, in addition, the sequence $(T_k)_{k\in \bb{N}}$ is monotone then 
$\alpha(T) = \lim_{k\in \bb{N}} \alpha(T_k)$.
\item[(ii)] 
The function $\alpha : (\cl A\otimes\cl B)_+ \to \bb{R}^+$ is convex, monotone and 
continuous in the norm topology.

\end{itemize}
\end{proposition}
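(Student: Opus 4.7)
The plan rests on the representation $\alpha(T) = \sup\{\sigma(T) : \sigma \in \bist(\phi,\psi)\}$, which presents $\alpha$ as the pointwise supremum, over the weak$^*$-compact convex set $\bist(\phi,\psi)$ of states on $\cl A\otimes\cl B$, of the family of weak$^*$-continuous linear functionals $\{\sigma(\cdot) : \sigma \in \bist(\phi,\psi)\}$ on $\cl A \otimes \cl B$ (the latter viewed as a subspace of $(\cl A \otimes \cl B)^{**}$). All assertions of the proposition will flow from this single observation.

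For part (ii), monotonicity and convexity are formal. If $S \leq T$ in $(\cl A \otimes \cl B)_+$ and $\sigma \in \bist(\phi,\psi)$, then $\sigma(S) \leq \sigma(T) \leq \alpha(T)$, and taking the supremum over $\sigma$ yields $\alpha(S) \leq \alpha(T)$; convexity is automatic from the fact that $\alpha$ is a supremum of affine functionals. Norm continuity is in fact Lipschitz with constant $1$: since every $\sigma \in \bist(\phi,\psi)$ is a state, $|\sigma(S) - \sigma(T)| \leq \|S - T\|$, and taking the supremum gives $|\alpha(S) - \alpha(T)| \leq \|S - T\|$.

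For part (i), lower semicontinuity uses the same presentation: for each fixed $\sigma \in \bist(\phi,\psi)$, the weak$^*$ continuity of $\sigma$ gives $\sigma(T) = \lim_k \sigma(T_k) \leq \liminf_k \alpha(T_k)$, and passing to the supremum over $\sigma$ produces $\alpha(T) \leq \liminf_k \alpha(T_k)$. If $(T_k)$ is monotone increasing, then $T_k \leq T$, so monotonicity from (ii) forces $\alpha(T_k) \leq \alpha(T)$, hence $\limsup_k \alpha(T_k) \leq \alpha(T)$, and equality follows. For a monotone decreasing sequence the argument is more delicate: using Remark \ref{r_less}(i), choose $\sigma_k \in \bist(\phi,\psi)$ with $\sigma_k(T_k) = \alpha(T_k)$, and by weak$^*$ compactness of $\bist(\phi,\psi)$ pass to a weak$^*$-convergent subnet $\sigma_{k_\nu} \to \sigma \in \bist(\phi,\psi)$. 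For each fixed $m$, we have $T_k \leq T_m$ eventually in $k$, whence $\sigma_{k_\nu}(T_{k_\nu}) \leq \sigma_{k_\nu}(T_m)$; taking the limit along the subnet gives $\limsup_\nu \alpha(T_{k_\nu}) \leq \sigma(T_m)$, and letting $m \to \infty$, together with weak$^*$ continuity of $\sigma$ on $T_m \to T$, produces $\limsup_\nu \alpha(T_{k_\nu}) \leq \sigma(T) \leq \alpha(T)$. Since $(\alpha(T_k))$ is a monotone decreasing sequence of reals, this controls the full sequence.

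The main subtlety is precisely the decreasing monotone case: lower semicontinuity alone is insufficient, because one must interchange two weak$^*$ limits — the convergence $\sigma_k \to \sigma$ and the convergence $T_k \to T$ — while weak$^*$ continuity only controls one variable at a time. Compactness of $\bist(\phi,\psi)$ combined with the monotonicity of $(T_k)$ resolves this via the sandwich inequality $\sigma_k(T_k) \leq \sigma_k(T_m)$ for eventually large $k$, which decouples the two limiting processes.
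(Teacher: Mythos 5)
Your proof is correct, and in two places it takes a genuinely different (and arguably cleaner) route than the paper. For the monotone case in (i), the paper does not split into increasing and decreasing cases: it views $f_k(\sigma)=\sigma(T_k)$ and $f(\sigma)=\sigma(T)$ as continuous functions on the compact set $\bist(\phi,\psi)$ and invokes Dini's Theorem to get uniform convergence, whence $\alpha(T_k)=\|f_k\|_\infty\to\|f\|_\infty=\alpha(T)$; your decreasing-case argument (extract $\sigma_k$ attaining the supremum, pass to a weak$^*$-convergent subnet, and decouple the two limits via the sandwich $\sigma_{k_\nu}(T_{k_\nu})\le\sigma_{k_\nu}(T_m)$) replaces Dini by a direct compactness argument and is equally valid — the key points (that $\bist(\phi,\psi)$ is weak$^*$ closed so the subnet limit $\sigma$ stays in it, and that $(\alpha(T_k))$ is monotone so subnet control suffices) are all in place. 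For norm continuity in (ii), the paper runs a subsequence-extraction argument with optimal couplings $\sigma_k$ and an $\epsilon$-estimate, whereas your observation that $|\sigma(S)-\sigma(T)|\le\|S-T\|$ for every state $\sigma$ gives the stronger conclusion that $\alpha$ is $1$-Lipschitz in one line; this is a genuine simplification. A minor further difference: your lower-semicontinuity argument does not need the supremum defining $\alpha(T)$ to be attained (you bound $\sigma(T)$ for each fixed $\sigma$ and then take the supremum), while the paper's version invokes Remark \ref{r_less}~(i) to pick an optimal $\sigma$; both work since $T\in(\cl A\otimes\cl B)_+$.
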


\begin{proof}
(i)	Assume that $T_k\to_{k\to \infty} T$ in the weak* topology and, using Remark \ref{r_less} (i), let
	$\sigma\in \bist(\phi,\psi)$ have the property $\sigma(T) = \alpha(T)$.
	Then
	$$\alpha(T) = \lim_{k\to\infty} \sigma(T_k) \leq \liminf_{k\in \bb{N}} \alpha(T_k).$$
	
	Now suppose that $T_k\to_{k\to \infty} T$ in the weak* topology and the sequence $(T_k)_{k\in\bb{N}}$
	is monotone. 
	Let $f, f_k : \bist(\phi,\psi) \to \bb{R}^+$ be the functions given by $f(\sigma) = \sigma(T)$ and
	$f_k(\sigma) = \sigma(T_k)$, $k\in \bb{N}$. 
	Then the sequence $(f_k)_{k\in \bb{N}}$ is monotone, consists of continuous functions, 
	and converges to the continuous function $f$. By Dini's Theorem, $f_k\to_{k\to\infty} f$ uniformly; 
	in particular, $\|f_k\|_{\infty} \to_{k\to\infty} \|f\|_{\infty}$, that is, $\alpha(T_k)\to_{k\to\infty} \alpha(T)$.
	
(ii)	It is trivial that, for $S,T\in (\cl A\otimes \cl B)^{**}_+$, the inequality $S\leq T$ implies 
	$\alpha(S)\leq \alpha(T)$. 
	For the convexity, let $S$ and $T$ be positive contractions in $(\cl A\otimes \cl B)^{**}$, and $s,t\in [0,1]$, $s + t = 1$. Then 
\begin{eqnarray*}
\alpha(sS + tT)
& = & 
\sup\{\sigma(sS + tT) : \sigma\in \mathsf{C}(\phi,\psi)\}\\
& \leq & 
\sup\{s\sigma(S) + t\tau(T) : \sigma,\tau\in \mathsf{C}(\phi,\psi)\}
= 
s\alpha(S) + t\alpha(T).
\end{eqnarray*}
	
	Let $(T_k)_{k\in \bb{N}}$ be a sequence of positive elements in $\cl A\otimes\cl B$, and $T$ be a positive element in $\cl A\otimes\cl B$. 
	Assume that $\|T_k - T\|\to_{k\to \infty} 0$ and, using Remark \ref{r_less} (i), let 
	$\sigma_k\in \bist(\phi,\psi)$ be such that 
	$\alpha(T_k) = \sigma_k(T_k)$, $k\in \bb{N}$. 
	Suppose that $\alpha(T_{k_l})\to_{l\to\infty} \delta$ 
	for some subsequence $(k_l)_{l\in \bb{N}}$. 
	By the weak* compactness of $\bist(\phi,\psi)$, we may assume, without loss of generality, that 
	$\sigma_{k_l}\to_{l\to \infty} \sigma$ in the weak* topology, for some $\sigma\in \bist(\phi,\psi)$. 
	For $\epsilon > 0$, let $l_0\in \bb{N}$ be such that 
	$\|T_{k_l} - T_{k_{l_0}}\| < \epsilon$ and $|\sigma(T) - \sigma_{k_l}(T)| < \epsilon$ whenever $l \geq l_0$.
	Then
	\begin{eqnarray*}
		|\sigma(T) - \alpha(T_{k_l})|
		& = & 
		|\sigma(T) - \sigma_{k_l}(T_{k_l})|\\
		&  \leq & 
		|\sigma(T) - \sigma_{k_l}(T)| + |\sigma_{k_l}(T) - \sigma_{k_l}(T_{k_l})|\\
		&  \leq & 
		|\sigma(T) - \sigma_{k_l}(T)| + \|T - T_{k_l}\| < 2\epsilon, 
	\end{eqnarray*}
	whenever $l\geq l_0$.
	It follows that 
	$$\alpha(T) \geq \sigma(T) \geq \alpha(T_{k_l}) - 2\epsilon, \  \ \ \ l\geq l_0,$$
	implying that 
	$\delta \leq \alpha(T).$
	Thus, $\limsup_{k\in \bb{N}} \alpha(T_k) \leq \alpha(T)$. 
	The proof is now complete in view of (i). 
\end{proof}

We next record a simple observation regarding the behaviour of the 
coupling capacity with respect to compositions with maps.
If $(\cl A, \phi)$ and $(\cl B, \psi)$ are unital $C^*$-algebras equipped with states, 
a  positive map $\Theta : \cl A\otimes\cl B \to \cl A\otimes\cl B$ will called $(\phi, \psi)$-reducing if  
$\Theta^*(\bist(\phi,\psi))\subseteq \tilde{\bist}(\phi,\psi)$.

\begin{proposition}\label{p_red}
Let $(\cl A, \phi)$ and $(\cl B, \psi)$ be measured $C^*$-algebras, 
and let $T$ be a positive contraction in $(\cl A\otimes\cl B)^{**}$. 

\begin{itemize}
\item[(i)]
If $\Theta : \cl A\otimes\cl B \to \cl A\otimes\cl B$ is a positive $(\phi, \psi)$-reducing map, then 
$\alpha(\Theta^{**}(T)) \leq \alpha(T)$.

\item[(ii)]
If $\pi \in \textup{Aut}(\cl A)$ and $\rho \in \textup{Aut}(\cl B)$ are automorphisms such that $\phi \circ \pi = \phi$ and $\psi\circ \rho =\psi$, then 
\[ \alpha(T) = \alpha\left((\pi\otimes \rho)^{**}(T)\right).\]
In particular, if $\phi$ and $\psi$ are traces
and $u$ (resp. $v$) is a unitary in $\cl A$
	(resp. $\cl B$), then $\alpha(T) = \alpha\left((u\otimes v)T(u\otimes v)^*\right)$.

\end{itemize}
\end{proposition}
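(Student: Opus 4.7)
The plan is to derive both parts from the alternative characterisation $\alpha(T) = \sup\{\sigma(T) : \sigma \in \tilde{\bist}(\phi, \psi)\}$ recorded in Remark \ref{r_less} (ii), together with the standard duality relating a bounded map to its second adjoint.

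For part (i), the central observation is that for any $\sigma \in (\cl A \otimes \cl B)^*$, the identity $\sigma(\Theta^{**}(T)) = \Theta^*(\sigma)(T)$ holds for all $T \in (\cl A \otimes \cl B)^{**}$: on $\cl A \otimes \cl B$ this is simply the definition of $\Theta^*$, and since both sides are weak* continuous in $T$ (the left by weak* continuity of $\Theta^{**}$ and of functionals in $(\cl A \otimes \cl B)^*$ canonically extended to the second dual, the right by weak* continuity of the canonical extension of $\Theta^*(\sigma)$), the identity persists on $(\cl A \otimes \cl B)^{**}$ by weak* density. Given $\sigma \in \bist(\phi, \psi)$, the $(\phi, \psi)$-reducing hypothesis places $\Theta^*(\sigma)$ in $\tilde{\bist}(\phi, \psi)$, so
$$\sigma(\Theta^{**}(T)) = \Theta^*(\sigma)(T) \leq \alpha(T).$$
Taking the supremum over $\sigma \in \bist(\phi, \psi)$ yields $\alpha(\Theta^{**}(T)) \leq \alpha(T)$.

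For part (ii), I would first verify that $\pi \otimes \rho$ is $(\phi, \psi)$-reducing in the strongest possible sense: for any $\sigma \in \bist(\phi, \psi)$ and $a \in \cl A$,
$$(\pi \otimes \rho)^*(\sigma)(a \otimes 1) = \sigma(\pi(a) \otimes 1) = \phi(\pi(a)) = \phi(a),$$
and the analogous computation on the $\cl B$-marginal shows that $(\pi \otimes \rho)^*(\sigma) \in \bist(\phi, \psi) \subseteq \tilde{\bist}(\phi, \psi)$. The same reasoning applies verbatim to the inverse automorphism $\pi^{-1} \otimes \rho^{-1}$, which equally preserves $\phi$ and $\psi$; so part (i) applied in both directions yields $\alpha(T) = \alpha((\pi \otimes \rho)^{**}(T))$. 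For the final assertion, one specialises to $\pi = \Ad(u)$ and $\rho = \Ad(v)$: the tracial hypothesis guarantees $\phi \circ \pi = \phi$ and $\psi \circ \rho = \psi$, while $(\pi \otimes \rho)(x) = (u \otimes v) x (u \otimes v)^*$ for $x \in \cl A \otimes \cl B$ extends by weak* continuity of conjugation by the unitary $u \otimes v$ to the identity $(\pi \otimes \rho)^{**}(T) = (u \otimes v) T (u \otimes v)^*$ on the second dual.

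No substantial obstacle arises here; the argument is essentially a bookkeeping exercise concerning the interaction between $\Theta$, $\Theta^*$ and $\Theta^{**}$, with the reducing property of the relevant maps and Remark \ref{r_less} (ii) doing the real work.
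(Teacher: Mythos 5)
Your proposal is correct and follows essentially the same route as the paper: both rest on the observation that for $\sigma\in\bist(\phi,\psi)$ the functional $\sigma\circ\Theta^{**}=\Theta^{*}(\sigma)$ lies in $\tilde{\bist}(\phi,\psi)$, so that Remark \ref{r_less} (ii) gives the inequality in (i), and part (ii) follows by applying (i) to $\pi\otimes\rho$ and its inverse. Your explicit verification of the duality identity $\sigma(\Theta^{**}(T))=\Theta^{*}(\sigma)(T)$ and of the formula $(\pi\otimes\rho)^{**}(T)=(u\otimes v)T(u\otimes v)^{*}$ merely spells out details the paper leaves implicit.
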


\begin{proof}
(i)
Using Remark \ref{r_less} (ii), we have 
\begin{eqnarray*}
\alpha(\Theta^{**}(T)) 
& = & 
\sup\{\sigma(\Theta^{**}(T)) : \sigma\in \bist(\phi,\psi)\}\\
& \leq & 
\sup\{\sigma'(T) : \sigma'\in \tilde{\bist}(\phi,\psi)\} = \alpha(T).
\end{eqnarray*}

(ii) 
Letting $\Theta = \pi\otimes\rho$, we have that $\Theta$ is invertible, positive, has a positive inverse, 
and $\Theta^*(\bist(\phi,\psi)) = \bist(\phi,\psi)$. The claim therefore follows from (i). 
\end{proof}

\subsection{Dependence on the underlying states}

In the last subsection, the pairs $(\cl A, \phi)$ and $(\cl B, \psi)$ were fixed and 
$\alpha(T)$ was examined as a function on $T$. 
We now briefly change the perspective and look at how $\alpha(T)$ changes if we fix $T$ and allow the states $\phi$ and $\psi$ to vary. 
In order to underline the dependence on the chosen reference states, 
we will write $\alpha_{\phi, \psi}(T)$ (resp. $\beta_{\phi, \psi}(T)$) 
for the parameter $\alpha$ (resp. $\beta$), 
introduced in Definition \ref{def:capac}.
Denote by $S_{\rm f}(\cl A)$ the collection of all faithful states on $S(\cl A)$ (note that $S_{\rm f}(\cl A)$  
is not closed unless $\cl A = \mathbb{C}$).

\begin{proposition}\label{prop:upper}
Fix two unital $C^*$-algebras $\cl A$ and $\cl B$ and a positive contraction  
$T \in \cl A\otimes\cl B$. 
\begin{itemize}
\item[(i)]
The function
\[S(\cl A) \times S(\cl B) \to \bb{R}_+; \ (\phi, \psi) \mapsto \alpha_{\phi, \psi}(T),\]
is upper semicontinuous.

\item[(ii)] If $\cl A$ and $\cl B$ are finite dimensional then 
the function
\[ S_{\rm f}(\cl A) \times S_{\rm f}(\cl B) \to \bb{R}_+; \ (\phi, \psi) \mapsto \alpha_{\phi, \psi}(T),\]
is continuous.
\end{itemize}
\end{proposition}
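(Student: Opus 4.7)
For (i), I would prove upper semicontinuity by showing that the superlevel set $\{(\phi,\psi) : \alpha_{\phi,\psi}(T) \geq c\}$ is closed for every $c \in \bb{R}$. Given a net $(\phi_\lambda,\psi_\lambda)\to (\phi_0,\psi_0)$ inside this set, Remark \ref{r_less}(i) (applicable because $T \in \cl A \otimes \cl B$) yields states $\sigma_\lambda \in \bist(\phi_\lambda,\psi_\lambda)$ with $\sigma_\lambda(T) = \alpha_{\phi_\lambda,\psi_\lambda}(T) \geq c$. Weak$^*$ compactness of $S(\cl A \otimes \cl B)$ lets me pass to a subnet $\sigma_{\lambda'} \to \sigma_0$, and weak$^*$ continuity of the marginal maps $\sigma \mapsto \sigma_{\cl A}$ and $\sigma \mapsto \sigma_{\cl B}$ places $\sigma_0$ in $\bist(\phi_0,\psi_0)$; weak$^*$ continuity of evaluation at $T$ then gives $\alpha_{\phi_0,\psi_0}(T) \geq \sigma_0(T) \geq c$.

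For (ii), combining with (i) reduces the task to establishing lower semicontinuity on faithful pairs. Fix a faithful $(\phi_0,\psi_0)$ and a net $(\phi_n,\psi_n) \to (\phi_0,\psi_0)$ in $S_f(\cl A) \times S_f(\cl B)$; pick $\sigma_0 \in \bist(\phi_0,\psi_0)$ attaining $\sigma_0(T) = \alpha_{\phi_0,\psi_0}(T)$. A direct marginal-correcting perturbation of $\sigma_0$ may exit the positive cone, because the density matrix of $\sigma_0$ need not be invertible. To sidestep this I first regularize: for $t \in (0,1)$ put $\tilde\sigma_0 := (1-t)\sigma_0 + t(\phi_0 \otimes \psi_0) \in \bist(\phi_0,\psi_0)$, which satisfies $\tilde\sigma_0(T) \geq (1-t)\alpha_{\phi_0,\psi_0}(T)$. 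Since in finite dimensions faithfulness of $\phi_0$ and $\psi_0$ is equivalent to invertibility of their density matrices, the density matrix of $\tilde\sigma_0$ is bounded below by $t\delta \cdot 1$ for some $\delta = \delta(\phi_0,\psi_0)>0$.

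Next, representing every functional by a density matrix with respect to fixed faithful traces on $\cl A$ and $\cl B$, I define $\tilde\sigma_n$ through the density matrix
\[
\tilde\rho_n := \tilde\rho_0 + (\rho_{\phi_n} - \rho_{\phi_0})\otimes \rho_{\psi_0} + \rho_{\phi_0}\otimes(\rho_{\psi_n} - \rho_{\psi_0}).
\]
A routine partial-trace computation (using $\psi_0(1)=\psi_n(1)=1$ and the symmetric identity for $\phi$) shows that $\tilde\sigma_n$ has marginals $\phi_n$ and $\psi_n$ and unit trace. Because $\rho_{\phi_n} \to \rho_{\phi_0}$ and $\rho_{\psi_n} \to \rho_{\psi_0}$ in operator norm, the correction $\tilde\rho_n - \tilde\rho_0$ is $o(1)$ in operator norm; once its norm drops below $t\delta$ we conclude $\tilde\rho_n \geq 0$, so $\tilde\sigma_n \in \bist(\phi_n,\psi_n)$. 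Therefore
\[
\liminf_n \alpha_{\phi_n,\psi_n}(T) \geq \liminf_n \tilde\sigma_n(T) = \tilde\sigma_0(T) \geq (1-t)\,\alpha_{\phi_0,\psi_0}(T),
\]
and letting $t \to 0^+$ gives the required lower semicontinuity.

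The hard part is exactly the positivity step just sketched: a naive perturbation $\sigma_0 + \xi_n$ may fail to be a state since $\sigma_0$ carries no uniform spectral gap, and this is the sole reason we cannot extend the argument beyond the faithful, finite-dimensional setting. Mixing in a small amount $t(\phi_0 \otimes \psi_0)$ manufactures that gap, and this is precisely where faithfulness of \emph{both} reference states and finite-dimensionality of the algebras are used. Part (i), by contrast, I expect to be a routine compactness argument with no serious obstacle.
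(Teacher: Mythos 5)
Your proof is correct. Part (i) is essentially identical to the paper's argument: attainment of the supremum via Remark \ref{r_less} (i), weak* compactness of the state space, and weak* continuity of the marginal maps and of evaluation at $T$. For part (ii), however, you take a genuinely different route. The paper establishes lower semicontinuity on the \emph{dual} side: it picks near-optimal certificates $a_k \in \cl A_+$, $b_k \in \cl B_+$ for $\beta_{\phi_k,\psi_k}(T)$, uses faithfulness of the limit state together with finite-dimensionality to show that the invertible density matrices $D_k$ of $\phi_k$ have uniformly bounded inverses and hence that $(a_k)$ and $(b_k)$ are norm-bounded, extracts convergent subsequences to produce a certificate for $\beta_{\phi,\psi}(T)$, and then invokes the duality $\alpha = \beta$ of Theorem \ref{th_bg}. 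You instead work entirely on the primal side, transporting a (regularized) optimal coupling for $(\phi_0,\psi_0)$ to an explicit coupling for $(\phi_n,\psi_n)$ via the marginal-correcting perturbation $\tilde\rho_0 + (\rho_{\phi_n}-\rho_{\phi_0})\otimes\rho_{\psi_0} + \rho_{\phi_0}\otimes(\rho_{\psi_n}-\rho_{\psi_0})$, with the convex mixture $t(\phi_0\otimes\psi_0)$ supplying the spectral gap that keeps the perturbed density matrix positive; your partial-trace check of the marginals is correct, and faithfulness and finite-dimensionality enter exactly where you say they do. Your construction is self-contained (it does not need Theorem \ref{th_bg}) and, being explicit, could in principle yield quantitative moduli of continuity; the paper's dual argument is shorter given that the duality theorem is already in hand, and fits the overall theme of exploiting the Monge--Kantorovich-type duality. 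One cosmetic point: in finite dimensions the state spaces are metrizable, so your tacit passage from nets to sequences in part (ii) is harmless, but it is worth saying so.
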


\begin{proof}
(i) 
Suppose that $(\phi_i, \psi_i)_{i \in I}$ is a net of states, weak$^*$-convergent to 
a pair $(\phi, \psi)\in S(\cl A) \times S(\cl B)$. 
Using Remark \ref{r_less} (i), 
choose $\sigma_i \in \bist(\phi_i, \psi_i)$ such that $\alpha_{\phi_i, \psi_i}(T)=\sigma_i(T)$, for each $i \in I$. 
After passing to a subnet if necessary, we may assume 
that $(\sigma_i)_{i \in I}$ converges to a state $\sigma \in S(\cl A \otimes \cl B)$.
It is clear that $\sigma \in \bist(\phi, \psi)$ and, naturally, 
$\alpha_{\phi, \psi}(T) \geq  \sigma(T) = \lim_{i \in I}\sigma_i(T)$.

(ii) 
Suppose that $(\phi_k, \psi_k)_{k \in \mathbb{N}}$
is a sequence of faithful states, convergent to $(\phi, \psi)\in S_{\rm f}(\cl A) \times S_{\rm f}(\cl B)$. 
For each $k \in \mathbb{N}$, 
choose $a_k \in \cl A_+$ and $b_k \in \cl B_+$  such that $T \leq a_k \otimes I + I \otimes b_k$ and  $\beta_{\phi_k, \psi_k}(T)\geq \phi_k(a_k) + \psi_k(b_k) - \frac{1}{k}$.

We claim that the sequence $(a_k)_{k\in \mathbb{N}}$  is bounded. 
Let $\tau \in S(\cl A)$ be a faithful trace and let $D$ and $D_k$ be (invertible) elements of $\cl A$ such that 
$\phi = \tau(D\cdot)$ and $\phi_k = \tau (D_k \cdot)$, $k\in \bb{N}$. 
Since $D_k\stackrel{k\to \infty}{\longrightarrow} D$, we have that $D_k^{-1}\stackrel{k\to \infty}{\longrightarrow} D^{-1}$. 
In particular, $(D_k^{-1})_{k \in \mathbb{N}}$ is bounded. By finite dimensionality, it follows that
\[ \|a_k\| \leq M \|D_k a_k\| \leq MC \tau(D_k a_k) = MC \phi_k(a_k), \ \ \ k \in \mathbb{N},\]
for some positive constants $M$ and $C$, 
depending only on $\cl{A}$ and the sequence $(\phi_k)_{k\in \mathbb{N}}$.
Since the sequence $(\phi_k(a_k))_{k\in \mathbb{N}}$ is bounded, so is 
the sequence $(a_k)_{k\in \mathbb{N}}$  is bounded; by symmetry, so is 
the sequence $(b_k)_{k\in \mathbb{N}}$.

After passing to subsequences if necessary, $a_k\to_{k\to \infty} a$ and 
$b_k\to_{k\to \infty} b$ for some $a \in \cl A_+$ and $b \in \cl B_+$. 
We have $T \leq a \otimes I + I \otimes b$ and  
$$\beta_{\phi, \psi}(T)\leq \phi(a) + \psi(a) = \lim_{k \to \infty} \phi_k(a_k) + \psi_k(b_k) - \frac{1}{k} \leq \beta_{\phi_k, \psi_k}(T).$$
The claim now follows after an application of Theorem \ref{th_bg} and Proposition \ref{prop:upper} (i).
\end{proof}


\subsection{The commutative case}\label{s_comcase}

In this section, we assume that $\cl A$ and $\cl B$ are abelian. We will see that, in this case, the coupling capacity
$\alpha$ coincides with some 
previously studied 
parameters, appearing before in the theory of optimal transport and in operator algebra theory. 

We first note that, by the Gelfand Theorem, 
every unital abelian C*-algebra is *-isomorphic to 
the C*-algebra $C(X)$, for some compact Hausdorff space $X$. 
If $X$ is a compact Hausdorff space, 
we write $\cl F_X$ for the $\sigma$-algebra of Borel subsets of $X$. 
Given $\alpha\in \cl F_X$, 
the linear functional $e_{\alpha} : M(X)\to \bb{C}$, given by
$$e_{\alpha}(\mu) = \mu(\alpha), \ \ \ \mu\in M(X),$$
is bounded with $\|e_{\alpha}\| = 1$, and hence gives rise to
an element of $C(X)^{**}$, which will be denoted in the same way.
By abuse of notation, we identify $e_{\alpha}$ with the characteristic function $\chi_{\alpha}$ of $\alpha$, 
thus viewing $\chi_{\alpha}$ as an element of $C(X)^{**}$.

We fix compact Hausdorff spaces $X$ and $Y$, and set $\cl A = C(X)$ and $\cl B = C(Y)$. 
Fix Borel probability measures $\mu$ and $\nu$ on $X$ and $Y$, respectively. 
We will write $L^p(X)$ and $L^p(Y)$ for the corresponding $L^p$-spaces, where $p \in \{1,\infty\}$, with 
respect to $\mu$ and $\nu$, respectively. 
We equip $X\times Y$ with the product $\sigma$-algebra $\cl F_{X,Y}$, that is the $\sigma$-algebra 
generated by the sets $A\times B$, where $A\in \cl F_X$ and $B\in \cl F_Y$; 
note that $\cl F_{X,Y}$ is contained in the Borel $\sigma$-algebra $\cl F_{X\times Y}$ of
$X\times Y$. 
Given a positive measure $\sigma$ on $(X\times Y,\cl F_{X,Y})$, let $\sigma^*$ be the outer measure associated with $\sigma$ and let 
$\sigma_X$ (resp. $\sigma_Y$) be the $X$-marginal (resp. the $Y$-marginal) of $\sigma$.

Let $\kappa \subseteq X\times Y$. The following parameters, associated with $\kappa$, were defined in \cite{hs}:

\begin{itemize}
\item[(i)] 
$\alpha(\kappa) = \sup\{\sigma^*(\kappa) : \sigma_X\leq \mu, \sigma_Y\leq \nu\}$;

\item[(ii)]
$\beta(\kappa) = \inf\{\int_X a d\mu+ \int_Y b d\nu : 
a\in L^{\infty}(X), b\in L^{\infty}(Y), a(x) + b(y)\geq 1 \mbox{ on } \kappa\}$;

\item[(iii)]
$\gamma(\kappa) = \inf\{\mu(A) + \nu(B) : A\in \cl F_X, B\in \cl F_Y, \kappa \subseteq (A\times Y)\cup (X\times B)\}$.
\end{itemize}

We will now show that the above parameters coincide with these studied in our paper.

\begin{proposition}\label{l_mc}
Let $(X,\mu)$ and $(Y,\nu)$ be probability spaces and $\kappa\in \cl F_{X,Y}$. 
Then 
$\alpha(\kappa) = \alpha(\chi_\kappa)$, $\beta(\kappa) = \beta(\chi_\kappa)$ and 
$\gamma(\kappa) = \gamma(\chi_\kappa)$.
\end{proposition}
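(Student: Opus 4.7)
The approach translates each definition in Definition \ref{def:capac} into measure-theoretic terms via the Gelfand--Riesz correspondence, after which the three equalities become essentially notational.

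First I would set up the dictionary. By the Riesz Representation Theorem, states on $C(X\times Y) = C(X)\otimes C(Y)$ correspond bijectively to regular Borel probability measures on $X\times Y$; under this correspondence, $\bist(\mu,\nu)$ consists precisely of those measures whose $X$- and $Y$-marginals are $\mu$ and $\nu$. For $\kappa\in \cl F_{X,Y}\subseteq \cl F_{X\times Y}$ and $\sigma$ corresponding to $m_\sigma$, one has $\sigma(\chi_\kappa) = m_\sigma(\kappa) = m_\sigma^*(\kappa)$. Also, for a Borel set $A\subseteq X$, the element $\chi_A = e_A$ introduced in the notation section is a projection in $C(X)^{**}$ with $\phi(\chi_A) = \mu(A)$; more generally, the assignment $f\mapsto \hat f$ with $\hat f(\tau) = \int f\,d\tau$ embeds bounded Borel functions on $X$ into $C(X)^{**}_+$, compatibly with $\phi$.

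For the equality $\alpha(\kappa) = \alpha(\chi_\kappa)$, I would use Remark \ref{r_less}(ii) to rewrite $\alpha(\chi_\kappa)$ as a supremum over sub-couplings in $\tilde{\bist}(\mu,\nu)$, which via the dictionary correspond to positive Radon measures on $X\times Y$ with marginals $\leq \mu,\nu$, matching the Haydon--Shulman form. The inequality $\alpha(\chi_\kappa)\leq \alpha(\kappa)$ is immediate by restricting from the Borel $\sigma$-algebra to $\cl F_{X,Y}$; the reverse uses the regularity of $\mu$ and $\nu$ (and the argument in Remark \ref{r_less}(ii)) to extend any $\cl F_{X,Y}$-sub-coupling with marginals $\leq \mu,\nu$ to a Radon coupling.

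For $\beta(\kappa) = \beta(\chi_\kappa)$ and $\gamma(\kappa) = \gamma(\chi_\kappa)$, one direction is direct: given Haydon--Shulman-feasible bounded Borel functions $a,b$ (respectively Borel sets $A,B$ with $\kappa\subseteq (A\times Y)\cup(X\times B)$), the lifted elements $\hat a\in C(X)^{**}_+,\hat b\in C(Y)^{**}_+$ (respectively $\chi_A,\chi_B$) satisfy the operator inequalities $\chi_\kappa\leq \hat a\otimes 1 + 1\otimes \hat b$ and $\chi_\kappa\leq \chi_A\otimes 1 + 1\otimes \chi_B = (\chi_A\otimes 1)\vee (1\otimes \chi_B)$ with the required cost, since pointwise inequalities between bounded Borel functions lift to operator inequalities in $(C(X\times Y))^{**}$, and the join of commuting projections agrees with the characteristic function of the union. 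For the reverse inequalities, I would pass to the GNS representations $\pi_\mu:C(X)^{**}\to L^\infty(X,\mu)$ and $\pi_\nu:C(Y)^{**}\to L^\infty(Y,\nu)$, pick Borel representatives of the images, and use the identification of $C(X)^{**}$ with bounded functions on the hyperstonean cover of $X$ together with the fact that inequalities in $(C(X\times Y))^{**}$ descend to every coupling-representation, yielding covers that fail $\kappa$ only on a set negligible for every $\sigma\in\bist(\mu,\nu)$; by regularity such a set can be absorbed into a small enlargement of $A,B$ (or of $\hat a,\hat b$) without changing the cost. Lemma \ref{l_spand}(i) aligns the join with the sum in the $\gamma$ argument, while the $\beta$ argument proceeds analogously with positive bounded Borel functions in place of projections.

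The main obstacle is this last step: extracting an honest pointwise or set-theoretic inequality on $\kappa$ from an \emph{operator} inequality in the bidual $(C(X\times Y))^{**}$. The technical work concentrates on reconciling measurability classes (product $\sigma$-algebra vs.\ Borel vs.\ $\mu\otimes\nu$-completion) and on verifying that optimal Borel representatives of $C(X)^{**}$- and $C(Y)^{**}$-elements can be chosen so that the resulting cover is genuinely set-theoretic on $\kappa$, and not merely valid modulo sets negligible for every coupling.
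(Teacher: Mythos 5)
Your treatment of $\alpha$ is exactly the paper's: for $\kappa\in\cl F_{X,Y}$ one has $\sigma^*(\kappa)=\sigma(\kappa)=\sigma(\chi_\kappa)$, and Remark \ref{r_less}(ii) matches the two suprema. For the inequality $\beta(\chi_\kappa)\leq\beta(\kappa)$ your route differs from the paper's and is viable: you push a Borel-feasible pair $(a,b)$ into the biduals via $f\mapsto\hat f$, $\hat f(\tau)=\int f\,d\tau$, and test positivity of $\hat a\otimes 1+1\otimes\hat b-\chi_\kappa$ against positive measures on $X\times Y$. The paper instead starts from $(a,b)\in L^\infty(X,\mu)\times L^\infty(Y,\nu)$ and lifts through the normal $*$-epimorphisms $\pi_\mu:C(X)^{**}\to L^\infty(X,\mu)$ and $\pi_\nu:C(Y)^{**}\to L^\infty(Y,\nu)$, using that $*$-epimorphisms are complete quotient maps to adjust the lifts so that the bidual inequality holds while the cost $\langle\tilde a,\mu\rangle+\langle\tilde b,\nu\rangle$ is unchanged. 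Your version is more elementary but does require verifying that $f\mapsto\hat f$ is a positive $*$-homomorphism and that the image of the function $(x,y)\mapsto a(x)$ coincides with $\hat a\otimes 1$ inside $(C(X)\otimes C(Y))^{**}$; both are true and worth a line each.

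The gap is in the reverse inequalities $\beta(\kappa)\leq\beta(\chi_\kappa)$ and $\gamma(\kappa)\leq\gamma(\chi_\kappa)$, which you flag as ``the main obstacle'' but do not close, and the particular repair you sketch cannot work. You propose to recover an everywhere-on-$\kappa$ inequality (or an honest set-theoretic cover) by absorbing a discrepancy that is negligible for every coupling into a small enlargement of $a,b$ (or $A,B$); but a $\mu\times\nu$-null set such as the diagonal in $[0,1]^2$ with Lebesgue marginals admits no cover $(A\times Y)\cup(X\times B)$ with $\mu(A)+\nu(B)$ small, so null discrepancies are not absorbable in the $\gamma$-problem. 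The paper sidesteps this entirely by reading the Haydon--Shulman constraint ``$a(x)+b(y)\geq 1$ on $\kappa$'' in the $L^\infty$ sense, i.e.\ identifying it with positivity of $(\pi_\mu\otimes\pi_\nu)(\tilde a\otimes 1+1\otimes\tilde b-\chi_\kappa)$. With that reading the reverse direction is immediate: apply the positive map $\pi_\mu\otimes\pi_\nu$ to a bidual-feasible pair, note $\langle\tilde a,\mu\rangle=\int_X\pi_\mu(\tilde a)\,d\mu$ so the cost is preserved, and obtain the $\gamma$ statement by restricting to projections (whose images under $\pi_\mu,\pi_\nu$ are characteristic functions of Borel sets) together with Lemma \ref{l_spand}(i). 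The missing idea is therefore not a finer regularity or hyperstonean argument but the decision to compare the two optimization problems on the quotient $L^\infty$ level rather than upstairs in the bidual.
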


\begin{proof}
Since $\kappa\in \cl F_{X,Y}$, we have that $\sigma^*(\kappa) = \sigma(\kappa)$, and hence the claim about the parameter $\alpha$ follows from Remark \ref{r_less} (ii). 

Moving to $\beta$, let $\pi_{\mu} : C(X)\to \cl B(L^2(X,\mu))$ be the *-representation given by $\pi_{\mu}(a)\xi = a\xi$, $a\in C(X)$, $\xi\in L^2(X,\mu)$. 
Extend $\pi_{\mu}$ to a normal *-representation (denoted in the same way) 
$\pi_{\mu} : C(X)^{**}\to \cl B(L^2(X,\mu))$; it is clear that its range can be canonically identified with $L^{\infty}(X,\mu)$ and we hence obtain a *-epimorphism $\pi_{\mu} : C(X)^{**}\to L^{\infty}(X,\mu)$. 
Similarly, we have a *-epimorphism $\pi_{\nu} : C(Y)^{**}\to L^{\infty}(Y,\nu)$.

Note that, given 
$a\in L^{\infty}(X,\mu)$ and $\tilde{a}\in C(X)^{**}$ such that $\pi_{\mu}(\tilde{a}) = a$, 
(resp. $b\in L^{\infty}(Y,\nu)$ and $\tilde{b}\in C(Y)^{**}$ such that $\pi_{\nu}(\tilde{b}) = b$),
we have 
$$\langle \tilde{a},\mu\rangle = \int_{X} ad\mu
\ \ \mbox{(resp. } \langle \tilde{b},\nu\rangle = \int_{Y} bd\nu\mbox{)}.$$ 
Assume that $a(x) + b(y) \geq 1$ on $\kappa$. This 
means that 
$$(\pi_{\mu}\otimes \pi_{\nu})
(\tilde{a}\otimes 1 + 1 \otimes \tilde{b} - 
\chi_\kappa) \geq 0.$$
Using the fact that *-epimorphisms are complete quotient maps, we conclude that 
$\tilde{a}\otimes 1 + 1 \otimes \tilde{b} - 
\chi_\kappa \geq 0$, at the expense of possibly changing $\tilde{a}$ and $\tilde{b}$, while retaining their positivity and the properties
$\pi_{\mu}(\tilde{a}) = a$ and $\pi_{\nu}(\tilde{b}) = b$. 
These arguments show that $\beta(\kappa) = \beta(\chi_\kappa)$. 
Finally, the claim about the parameter $\gamma$ are obtained from the one about $\beta$ after restricting $a$ and $b$ to be projections. 
\end{proof}

\begin{remark}\label{r_consequ}
\rm 
By Proposition \ref{l_mc}, as consequences of 
Theorem \ref{th_bg} and Proposition \ref{p_ifcom} (together with the remarks following the latter)
we obtain the fact that, whenever $\kappa\subseteq X\times Y$ is a clopen set, we have that 
$\alpha(\kappa) = \beta(\kappa) = \gamma(\kappa)$. 
The latter equalities are very special instances of Corollaries of Lemma 1 and Theorem 1 in \cite{hs} which, in their turn, are quantitative versions of 
Arveson's Null Set Theorem \cite[Section 1.4]{a}. Naturally the results of \cite{hs} apply in much greater generality; we will see a special instance of this below.


\end{remark}

\begin{proposition} \label{prop:lowerscts}
Let $(\cl A, \phi)$ and $(\cl B, \psi)$ be measured abelian $C^*$-algebras and suppose that $T\in (\cl A \otimes \cl B)_+^{**}$ is lower semicontinuous, i.e.\ there exists an increasing net $(T_i)_{i \in I}$ with $T_i \in (\cl A \otimes \cl B)_+$ which converges to $T$ in weak*-topology. Then $\alpha(T) = \beta(T)$.
\end{proposition}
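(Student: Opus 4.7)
The approach is to show $\beta(T) \leq \alpha(T)$; the reverse inequality is the content of Theorem~\ref{th_bg}. The strategy is to transfer the equality $\alpha(T_i) = \beta(T_i)$, available for each $T_i \in (\cl A\otimes\cl B)_+$, to $T$ itself by a weak* compactness argument applied to nearly-optimal feasible pairs $(a_i, b_i)$ for $\beta(T_i)$.

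First, since $(T_i)_{i\in I}$ is increasing and bounded (by $T$, which is a positive contraction), $T = \sup_i T_i$ in the von Neumann algebra $(\cl A\otimes\cl B)^{**}$. Every coupling $\sigma \in \bist(\phi,\psi)$ extends canonically to a normal state of $(\cl A\otimes\cl B)^{**}$, hence $\sigma(T) = \sup_i \sigma(T_i)$; exchanging the two suprema yields $\alpha(T) = \sup_i \alpha(T_i) = \sup_i \beta(T_i)$, the last equality being Theorem~\ref{th_bg}. Fix $\eps > 0$. Using (as in the proof of Theorem~\ref{th_bg}) that the infimum defining $\beta(T_i)$ can be taken over $a \in \cl A_+$ and $b \in \cl B_+$, select $a_i \in \cl A_+$, $b_i \in \cl B_+$ with $T_i \leq a_i\otimes 1 + 1\otimes b_i$ and $\phi(a_i) + \psi(b_i) \leq \beta(T_i) + \eps$. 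Now commutativity enters: replacing $a_i, b_i$ by $a_i \wedge 1_{\cl A}$ and $b_i \wedge 1_{\cl B}$ (defined via continuous functional calculus) preserves the inequality $T_i \leq a_i\otimes 1 + 1\otimes b_i$ -- a pointwise check on the Gelfand spectra, using $\|T_i\|\leq 1$, verifies this -- and can only decrease $\phi(a_i) + \psi(b_i)$. So we may assume $\|a_i\|, \|b_i\| \leq 1$.

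The net $((a_i, b_i))_{i \in I}$ then lies in a weak*-compact subset of $\cl A^{**}\times \cl B^{**}$, so admits a weak*-convergent subnet $(a_{i_k}, b_{i_k}) \to (a,b)$ with $0 \leq a \leq 1$ and $0 \leq b \leq 1$. Because $\phi$ and $\psi$ extend to normal states on $\cl A^{**}$ and $\cl B^{**}$, one has $\phi(a) + \psi(b) = \lim_k(\phi(a_{i_k}) + \psi(b_{i_k})) \leq \alpha(T) + \eps$. The maps $x\mapsto x\otimes 1$ and $y \mapsto 1\otimes y$ from $\cl A^{**}$ and $\cl B^{**}$ into $(\cl A\otimes\cl B)^{**}$ are weak*-continuous; the subnet $(T_{i_k})$ still converges weak* to $T$; and the positive cone of $(\cl A\otimes\cl B)^{**}$ is weak*-closed. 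Passing to the limit in $a_{i_k}\otimes 1 + 1\otimes b_{i_k} - T_{i_k} \geq 0$ therefore yields $T \leq a\otimes 1 + 1\otimes b$. Hence $\beta(T) \leq \phi(a) + \psi(b) \leq \alpha(T) + \eps$, and letting $\eps \to 0$ closes the proof.

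The hardest step, and the one that forces the abelian hypothesis, is the truncation at $1$: without a lattice structure on $\cl A_+$ compatible with the tensor product, one cannot in general bound the norms of $a_i, b_i$ uniformly while preserving the dominating inequality, and the compactness argument collapses. This is exactly the point identified in the remark following Theorem~\ref{th_bg} as obstructing the equality $\alpha(T)=\beta(T)$ beyond $(\cl A\otimes\cl B)_+$ in general.
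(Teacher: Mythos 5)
Your argument is correct and is essentially the paper's own proof: pick nearly-optimal pairs $(a_i,b_i)$ for $\beta(T_i)$, use the abelian structure (functional calculus / a pointwise check on the Gelfand spectrum) to truncate them to a uniformly bounded net, extract a weak*-convergent subnet whose limit $(a,b)$ remains feasible for $T$, and combine with $\alpha(T_i)=\beta(T_i)$ from Theorem~\ref{th_bg} and the monotonicity of $\alpha$. The only cosmetic differences are that the paper truncates at $\|T\|$ rather than at $1$ and works with $a_i\in\cl A^{**}_+$, $b_i\in\cl B^{**}_+$ throughout; your explicit justification of the truncation step is a welcome expansion of the paper's terse "by functional calculus".
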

\begin{proof}
Fix $\epsilon > 0$. Note first that by functional calculus  for each $i 
\in I$ we have
$$\beta(T_i) = \inf\{\phi(a) + \psi(b) : a\in \cl A^{**}, b\in \cl B^{**}, 
\|a\|, \|b\|\leq \|T\|, T_i\leq a\otimes 1 + 1 \otimes b\}.$$
For each $i \in I$ let then $a_i\in \cl A^{**}$ and $b_i\in \cl B^{**}$ be such that 
$\|a_i\|\leq \|T\|$, $\|b_i\|\leq \|T\|$, $T_i\leq a_i\otimes 1+1\otimes b_i$, and 
$$\phi(a_i) + \psi(b_i) \leq \beta(T_i) + \epsilon.$$
By passing to a subnet if necessary, assume that 
$$a_i\to_{i\in I} a \ \mbox{ and } \ b_i\to_{i\in I} b$$
in the weak* topologies of $\cl A^{**}$ and $\cl B^{**}$, respectively. 
We have that $T\leq a\otimes 1 + 1 \otimes b$.

Since $T_i \in \cl A\otimes\cl B$, by Theorem \ref{th_bg} we have $\alpha(T_i) = \beta(T_i)$, $i\in I$. 
There exists $i_0\in I$ such that, if $i\geq i_0$ then 
$$\beta(T) \leq \phi(a) + \psi(b)  \leq \phi(a_i) + \psi(b_i) + \epsilon 
\leq \beta(T_i) + 2\epsilon = \alpha(T_i) + 2\epsilon
\leq \alpha(T) + 2\epsilon,$$
where we have used the monotonicity of $\alpha$ for the last inequality. 
We conclude that $\beta(T)\leq \alpha(T)$, and the converse was already noted in Theorem \ref{th_bg}. 	
	
\end{proof}

\begin{remark}
\rm
Let $c : X\times Y\to [0,1]$ be a lower semi-continuous function. Then $c$ can be viewed as an element of 
$C(X\times Y)^{**}$ in a natural fashion (this was detailed in the second paragraph of this section in the case of 
characteristic functions of Borel sets). 
We can rewrite the equality between the parameters $\alpha$ and 
$\beta$ from the proposition above as the equality 
\begin{eqnarray*}
	& & 
	\hspace{-0.6cm} \sup \hspace{-0.05cm}\left\{\int_{X\times Y} \hspace{-0.1cm} c d\sigma : \sigma_X = \mu, \sigma_Y = \nu\right\} = \\
	& & 
	\hspace{-0.6cm} 
	\inf\hspace{-0.05cm}\left\{\hspace{-0.05cm}\int_X a d\mu \hspace{-0.05cm} + \hspace{-0.2cm} \int_Y b d\nu : 
	a\hspace{-0.05cm} \in \hspace{-0.05cm} L^{\infty}\hspace{-0.05cm}(X), 
	b \hspace{-0.05cm}\in\hspace{-0.05cm} L^{\infty}\hspace{-0.05cm}(Y), 
	c(x,y)\hspace{-0.05cm}\leq\hspace{-0.05cm} a(x) \hspace{-0.05cm} + \hspace{-0.05cm} 
	b(y) \mbox{ on } X\hspace{-0.07cm} \times\hspace{-0.07cm} Y\hspace{-0.05cm}\right\}\hspace{-0.1cm}.
\end{eqnarray*}
In the case under consideration, $L^{\infty}(X)\subseteq L^1(X)$ and $L^{\infty}(Y)\subseteq L^1(Y)$. 
It follows that the displayed equality persists if the infimum is taken after replacing $L^{\infty}(X)$ (resp. $L^{\infty}(Y)$)
by $L^1(X)$ (resp. $L^1(Y)$). Thus in this special case we recover 
the well-known Monge-Kantorovich duality formula in the theory of optimal transport 
(see e.g. \cite[Theorem 1.3]{villani0}). 
	
\end{remark}


\section{The matrix case}\label{s_mc}

In this section, we consider the simplest non-commutative case,
where 
$\cl A = \cl L(\bb{C}^n) \equiv M_n$, $\cl B = \cl L(\bb{C}^m) \equiv M_m$, for some fixed $n,m\in\bb{N}$.
We first show that the quantum Strassen theorem proved in \cite{zyyy} 
can be obtained as a consequence of Theorem \ref{th_bg}. 
For a subspace $\cl X\subseteq \mathbb C^n\otimes\mathbb C^m$ write $E_{\cl X}$ for the projection onto $\cl X$. For $\sigma\in (M_n\otimes M_m)_+$ write 
$$\supp\sigma=\{\xi\in \mathbb C^n\otimes\mathbb C^m: \langle\sigma\xi,\xi\rangle=0 \}^\perp.$$ 
In the sequel, it will be convenient to write $M_n^+$ and $M_n^h$ instead of $(M_n)_+$ and $(M_n)_h$, respectively. Recall that if $\phi$ is a state on $M_n$ we denote its associated density matrix by $A_\phi$.

\begin{proposition}
[Quantum Strassen Theorem \cite{zyyy}]
\label{qstrassen} 
Let $\cl X$ be a subspace of 
$\mathbb C^n\otimes \mathbb C^m$, 
$\phi$ (resp. $\psi$) be a state on $M_n$ (resp. $M_m$) and 
$\rho_1\in M_n^+$ (resp. $\rho_2\in M_m^+$) be such that $A_{\phi} = \rho_1$ (resp. $A_{\psi} = \rho_2$).
The following are equivalent:
\begin{itemize}
\item[(i)] 
$\alpha(E_{\cl X})=1$;

\item[(ii)] 
there is a coupling $\sigma\in \bist(\phi,\psi)$ such that $\supp \sigma\subseteq \cl X$;

\item[(iii)] $\tr(\rho_1a_1)\leq\tr(\rho_2a_2)$ whenever $a_1\in M_n^h$, $a_2\in M_m^h$ are 
such that $E_{\cl X^\perp}\geq a_1\otimes I_m-I_n\otimes a_2$.
\end{itemize}
    \end{proposition}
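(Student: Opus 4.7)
The plan is to treat (i)$\Leftrightarrow$(ii) as an easy density-matrix argument and to derive (i)$\Leftrightarrow$(iii) directly from the duality $\alpha = \beta$ of Theorem \ref{th_bg} via a simple change of variables.

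For (ii)$\Rightarrow$(i), I would simply note that if $\sigma \in \bist(\phi,\psi)$ satisfies $\supp\sigma \subseteq \cl X$ then $A_\sigma = A_\sigma E_{\cl X}$, so $\sigma(E_{\cl X}) = \tr(A_\sigma) = 1$; since $\alpha(E_{\cl X}) \leq 1$ automatically (as $E_{\cl X}$ is a projection and elements of $\bist(\phi,\psi)$ are states), this forces $\alpha(E_{\cl X}) = 1$. For (i)$\Rightarrow$(ii), I would invoke Remark \ref{r_less}(i) to choose $\sigma \in \bist(\phi,\psi)$ attaining the supremum. Then $\tr(A_\sigma E_{\cl X^\perp}) = 1 - \sigma(E_{\cl X}) = 0$, and since $A_\sigma \geq 0$ this forces $A_\sigma^{1/2} E_{\cl X^\perp} = 0$, hence $\supp\sigma = \ran A_\sigma \subseteq \cl X$.

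The heart of the argument is (i)$\Leftrightarrow$(iii). Because $E_{\cl X} \in M_n \otimes M_m$, Theorem \ref{th_bg} gives $\alpha(E_{\cl X}) = \beta(E_{\cl X})$, and in fact formula (\ref{eq_sae}) inside that proof expresses this common value as
\[
\inf\bigl\{\phi(a) + \psi(b) : a \in M_n^h,\ b \in M_m^h,\ E_{\cl X} \leq a \otimes I_m + I_n \otimes b\bigr\}.
\]
I would then perform the substitution $a_1 := I_n - a$, $a_2 := b$, which is a bijection of $M_n^h \times M_m^h$ onto itself. Under this change of variables the constraint $E_{\cl X} \leq a \otimes I_m + I_n \otimes b$ becomes $E_{\cl X^\perp} \geq a_1 \otimes I_m - I_n \otimes a_2$, while $\phi(a) + \psi(b) \geq 1$ becomes $\tr(\rho_2 a_2) - \tr(\rho_1 a_1) \geq 0$, i.e.\ $\tr(\rho_1 a_1) \leq \tr(\rho_2 a_2)$. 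Hence condition (iii) is exactly the statement $\beta(E_{\cl X}) \geq 1$, and since $\alpha(E_{\cl X}) = \beta(E_{\cl X}) \leq 1$ always, this is equivalent to (i).

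Honestly, there is no serious obstacle: Theorem \ref{th_bg} does essentially all the work, and the only point requiring care is that the hermitian version of $\beta$ used above (rather than the positive version in Definition \ref{def:capac}) coincides with $\beta(E_{\cl X})$ — but this is already verified inside the proof of Theorem \ref{th_bg}, where it is shown that in (\ref{eq_sae}) one may take $a,b$ hermitian, and then further shift them to be positive without changing $\phi(a) + \psi(b)$.
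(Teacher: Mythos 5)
Your proposal is correct and follows essentially the same route as the paper: the equivalence (i)$\Leftrightarrow$(ii) via the support/density-matrix observation (with Remark \ref{r_less}(i) guaranteeing the supremum is attained), and (i)$\Leftrightarrow$(iii) via Theorem \ref{th_bg} together with the substitution $a_1 = I_n - a$, $a_2 = b$ in the hermitian form (\ref{eq_sae}) of $\beta$. No gaps.
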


\begin{proof}
 (i)$\Leftrightarrow$(ii) It is enough to note that if $\sigma$ is a state on $M_n\otimes M_m$ 
then $\supp\sigma\subseteq\cl X$ if and only if $\sigma(E_{\cl X})=1$. In fact, 
\begin{eqnarray*}
& & 
\sigma (E_{\cl X})=1 \Longleftrightarrow \sigma(I - E_{\cl X}) = 0\\
& &
\Longleftrightarrow \sigma(\xi\xi^*) = \tr(\sigma\xi\xi^*) = \frac{1}{nm}\langle\sigma\xi,\xi\rangle = 0\  \mbox{ for all } \xi\in{\cl X}^\perp\\
& & 
\Longleftrightarrow \supp\sigma\subseteq \cl X.
 \end{eqnarray*}
 
 (i)$\Leftrightarrow$(iii) By Theorem \ref{th_bg}, $\alpha(E_{\cl X})=\beta(E_{\cl X})$. The fact that $\beta(E_{\cl X})=1$ is equivalent to 
 \begin{equation}\label{impl}
 E_{\cl X}\leq a\otimes I_m+I_n\otimes b\Rightarrow \phi(a)+\psi(b)\geq 1
 \end{equation}
 whenever $a\in M_n^+$, $b\in M_m^+$ and by the arguments in the proof of Theorem \ref{th_bg}, 
 whenever $a$, $b$ are hermitian.
 Letting $a_1=1-a$, $a_2=b$, (\ref{impl}) can be rewritten as 
 $$E_{\cl X^\perp}\geq a_1\otimes I_m-I_n\otimes a_2\ \Longrightarrow \ \phi(a_1)\leq \psi(a_2),$$
 giving the desired equivalence. 
\end{proof}

In view of Proposition \ref{qstrassen}, we see that, in the case of matrix algebras, 
Theorem \ref{th_bg} can be viewed as a quantitative and non-commutative 
extension of the Quantum Strassen Theorem.

\begin{remark}
\rm 
We note that the equivalence (i)$\Leftrightarrow$(ii) in Proposition \ref{qstrassen} persists in the general case
of measured C*-algebras $(\cl B(H_1),\phi)$ and $(\cl B(H_2),\psi)$, with $H_1, H_2$ Hilbert spaces (possibly infinite dimensional), $\phi$ and $\psi$ normal states,  and the subspace $\cl X$ replaced by an 
arbitrary projection $E\in \cl B(H_1 \otimes H_2)$. Together with a straightforward approximation argument it can be used to infer \cite[Theorem 4.3]{fgz}. 
\end{remark}

In the rest of the section both algebras $M_n$ and $M_m$ will be equipped with normalised traces $\tr$. 
As customary, we abbreviate \lq\lq completely positive and trace preserving'' to \lq\lq cptp'', 
and note that trace preservation is with respect to the normalised traces.

Recall that, given a map $\Phi:M_n \to M_m$, its associated \emph{Choi matrix} 
$\Gamma_\Phi \in M_n \otimes M_m$ is given by letting
\begin{equation}\label{eq_Phi}
(\Gamma_\Phi)_{i,j} = \Phi(\epsilon_{i,j}), \;\;\;i, j = 1,\ldots, n.
\end{equation}
Conversely, each matrix $\Gamma \in M_n (M_m)$ determines, via (\ref{eq_Phi}), 
a linear map $\Phi_\Gamma : M_n \to M_m$. 
The next statement, which characterises the elements of the set $\bist(\tr_n,\tr_m)$, 
is rather well-known and for $m=n$ is precisely \cite[Theorem 2.2]{ohno}. We include a straightforward proof for the convenience of the reader.

\begin{proposition}\label{lemma_choi}
Let $\sigma\in (M_n\otimes M_m)^*$. Recall that $A_\sigma \in M_n\otimes M_m=M_n(M_m)$ denotes the density matrix of $\sigma$. The following are equivalent: 
\begin{itemize}

\item[(i)] 
$\sigma\in\bist(\tr_n,\tr_m)$; 

\item[(ii)] 
$\frac{1}{n}\Phi_{A_{\sigma}}$ is unital and trace preserving.   
\end{itemize}
\end{proposition}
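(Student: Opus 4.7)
The plan is to unpack both conditions in terms of the matrix entries of $A_\sigma$ and then identify each marginal condition with one of the two properties of $\frac{1}{n}\Phi_{A_\sigma}$. First I would note that by the definition of the Choi matrix we may write
\[ A_\sigma = \sum_{i,j=1}^n \epsilon_{i,j}\otimes \Phi_{A_\sigma}(\epsilon_{i,j}), \]
and that the normalised trace on $M_n\otimes M_m \cong M_{nm}$ is the tensor product $\tr_n\otimes \tr_m$, so that $\sigma(T) = (\tr_n\otimes\tr_m)(A_\sigma T)$ for $T\in M_n\otimes M_m$.

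For the first marginal, I would compute, for arbitrary $a\in M_n$,
\[ \sigma(a\otimes 1) = \sum_{i,j} \tr_n(\epsilon_{i,j} a)\, \tr_m\bigl(\Phi_{A_\sigma}(\epsilon_{i,j})\bigr). \]
Testing against matrix units $a=\epsilon_{k,l}$ and using $\tr_n(\epsilon_{i,j}\epsilon_{k,l}) = \frac{1}{n}\delta_{jk}\delta_{il}$, this collapses to $\sigma(\epsilon_{k,l}\otimes 1) = \frac{1}{n}\tr_m(\Phi_{A_\sigma}(\epsilon_{l,k}))$. Comparing with $\tr_n(\epsilon_{k,l}) = \frac{1}{n}\delta_{kl}$, the condition $\sigma_{M_n}=\tr_n$ is thus equivalent to
\[ \tr_m\bigl(\Phi_{A_\sigma}(x)\bigr) = n\,\tr_n(x),\qquad x\in M_n, \]
that is, to $\frac{1}{n}\Phi_{A_\sigma}$ being trace preserving (with respect to the normalised traces).

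For the second marginal, I would similarly compute
\[ \sigma(1\otimes b) = \sum_{i,j} \tr_n(\epsilon_{i,j})\, \tr_m\bigl(\Phi_{A_\sigma}(\epsilon_{i,j}) b\bigr) = \frac{1}{n}\,\tr_m\bigl(\Phi_{A_\sigma}(I_n)\, b\bigr),\qquad b\in M_m, \]
so that $\sigma_{M_m}=\tr_m$ holds if and only if $\frac{1}{n}\Phi_{A_\sigma}(I_n)=I_m$, i.e.\ $\frac{1}{n}\Phi_{A_\sigma}$ is unital. Combining the two equivalences yields the proposition.

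No serious obstacle is anticipated; the only delicate point is keeping track of the normalisation factors coming from the fact that $\tr_n$, $\tr_m$ and $\tr_{nm}$ are all normalised traces (so that $\tr_n(I_n)=1$ rather than $n$), which is precisely what produces the factor $\frac{1}{n}$ rescaling of $\Phi_{A_\sigma}$ in the statement.
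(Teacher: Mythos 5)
Your proof is correct and follows essentially the same route as the paper's: both unpack the two marginal conditions by evaluating $\sigma$ on $a\otimes 1$ and $1\otimes b$ via the block entries $\Phi_{A_\sigma}(\epsilon_{i,j})$ of $A_\sigma$, identifying $\sigma_{M_m}=\tr_m$ with unitality and $\sigma_{M_n}=\tr_n$ (tested on matrix units) with trace preservation of $\frac{1}{n}\Phi_{A_\sigma}$. The only cosmetic difference is that you phrase each step as an equivalence from the start, whereas the paper proves (i)$\Rightarrow$(ii) and notes that the converse follows by reversing the computation.
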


\begin{proof}
(i)$\Rightarrow$(ii) 
To lighten notation, we set $\Phi = \Phi_{A_{\sigma}}$. 
Let $A_\sigma = (B_{i,j})_{i,j=1}^n\in M_n\otimes M_m$ 
(so that we have $\Phi(\epsilon_{i,j}) = B_{i,j}$ for each $i,j=1,\ldots,n$).
For $b \in M_m$ we have 
$$\tr( A_\sigma (I\otimes b))=\frac{1}{n}\sum_{i=1}^n\tr\nolimits_m{(B_{i,i}b)} = \tr\nolimits_m{\left(\frac{1}{n}\left(\sum_{i=1}^n B_{i,i}\right)b\right)}=\tr\nolimits_m{(b)},$$ so that
    $\frac{1}{n}\sum_{i=1}^nB_{i,i} = I_m$. 
    Therefore,
    $$\frac{1}{n}\Phi(I_n) = \frac{1}{n}\sum_{i=1}^n\Phi(\epsilon_{i,i}) = \frac{1}{n}\sum_{i=1}^nB_{i,i} = I_m.$$
Further, for $a = (a_{i,j})_{i,j=1}^n\in M_n$, we have 
    $$\tr(A_\sigma(a\otimes I))=\frac{1}{n}\sum_{i,j=1}^n\tr\nolimits_m{(B_{i,j}a_{j,i})} = \tr\nolimits_n{(a)}.$$
    Taking $a=\epsilon_{l,k}$ for $k,l=1,\ldots,n$ we obtain $\tr_m(\Phi(\epsilon_{k,l})) = \tr_m{(B_{k,l})}=\delta_{k,l} = n \tr_n (\epsilon_{k,l})$, which implies that $\frac{1}{n}\Phi$ is trace-preserving.  
    
(ii)$\Rightarrow$(i) follows by reversing the arguments in the previous paragraph. 
\end{proof}


Given a vector $\xi \in \cnm$, we write $S_{\xi}$ for the linear transformation 
from $\bb{C}^n$ into $\bb{C}^m$ corresponding to $\xi$
in the canonical way, so that $S_{e\otimes f} = fe^*$, $e\in \bb{C}^n$, $f\in \bb{C}^m$.  The singular value decomposition of $S_\xi$ 
allows us to find  (assuming, say, that $n\leq m$) a descending sequence of scalars $\lambda_1 \geq \lambda_2\geq \dots \geq \lambda_n\geq 0$ and  orthonormal collections $(e_i)_{i=1}^n\subseteq \bb{C}^n$ and $(f_i)_{i=1}^n\subseteq \bb{C}^m$ such that 
$\xi = \sum_{i=1}^n \lambda_i e_i \otimes f_i$. We will call any such decomposition a \emph{Schmidt decomposition} for $\xi$. Note that while the decomposition itself is not unique, the scalars $\lambda_i$ are determined uniquely.

Let $\xi\in \cnm$ be a unit vector and set $E_{\xi} = \xi\xi^*$.
The vector $\xi\in \cnm$ is often identified with the pure state with density matrix $E_{\xi}$. 
Under this identification, $\xi$ is called a \emph{separable state}, if 
$\xi = e\otimes f$ for some unit vectors $e\in \bb{C}^n$ and $f\in \bb{C}^m$. 
If $\xi$ is not separable, it is called an \emph{entangled state}; $\xi$ is further called 
\emph{maximally entangled} if (assuming $n\leq m$) there exist 
orthonormal sequences $(e_i)_{i=1}^n$ and $(f_i)_{i=1}^n$ in $\bb{C}^n$ and $\bb{C}^m$, 
respectively, such that $\xi = \frac{1}{\sqrt{n}} \sum_{i=1}^n e_i\otimes f_i$. Note that each of the conditions above has a simple description in terms of the Schmidt decomposition of $\xi$.

We first note an equivalent expression for $\alpha$ that will be  useful later. 

\begin{proposition}\label{p_eeal}
	Let $T\in M_n\otimes M_m$ be a positive contraction and let $\zeta$ be a maximally entangled vector in $\bb{C}^n\otimes \bb{C}^n$ of the form $\zeta = \frac{1}{\sqrt{n}}\sum_{i=1}^n e_i\otimes e_i$, where $\{e_i\}_{i=1}^n$ is an orthonormal basis of $\bb{C}^n$.
	We have that 
$$	\alpha(T) = \max\left\{\langle \Phi^{(n)}(T)\zeta,\zeta\rangle \ : \ 
\Phi: M_m\to M_n \mbox{ is a unital cptp map}\right\}.$$ 
\end{proposition}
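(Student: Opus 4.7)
The plan is to establish an affine bijection between the set of unital cptp maps $\Phi\colon M_m \to M_n$ and the set $\bist(\tr_n,\tr_m)$ of couplings, given by $\Phi \mapsto \sigma_\Phi$ where $\sigma_\Phi(T) := \langle \Phi^{(n)}(T)\zeta,\zeta\rangle$ for $T \in M_n \otimes M_m$. Once such a bijection is in place, the required equality follows immediately from the definition of $\alpha(T)$, and the supremum is attained (as asserted by the $\max$) since the corresponding supremum on the left hand side is attained by Remark \ref{r_less}(i).

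The first, easy direction is to verify that each $\sigma_\Phi$ lies in $\bist(\tr_n,\tr_m)$. The starting observation is the identity $\omega_\zeta(a \otimes 1_n) = \omega_\zeta(1_n \otimes a) = \tr_n(a)$ for $a \in M_n$, where $\omega_\zeta := \langle \cdot\,\zeta,\zeta\rangle$; this is immediate from the explicit form of $\zeta$. Combined with unitality and trace preservation of $\Phi$, this yields $\sigma_\Phi(a \otimes 1_m) = \omega_\zeta(a \otimes \Phi(1_m)) = \omega_\zeta(a \otimes 1_n) = \tr_n(a)$ and $\sigma_\Phi(1_n \otimes b) = \tr_n(\Phi(b)) = \tr_m(b)$; positivity of $\sigma_\Phi$ is inherited from that of $\Phi^{(n)} = \id_n \otimes \Phi$ and of $\omega_\zeta$.

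The main obstacle is the surjectivity: every $\sigma \in \bist(\tr_n,\tr_m)$ must arise as some $\sigma_\Phi$. Given $\sigma$ with density matrix $A_\sigma \in M_n \otimes M_m$, Choi's theorem identifies $A_\sigma$ as the Choi matrix of a completely positive map $\Psi_\sigma\colon M_n \to M_m$ via $\Psi_\sigma(\epsilon_{i,j}) = (A_\sigma)_{i,j}$, and Proposition \ref{lemma_choi} ensures that $\tfrac{1}{n}\Psi_\sigma$ is unital and trace preserving with respect to the normalised traces. To obtain a map in the required direction $M_m \to M_n$, I would take $\Phi := (\tfrac{1}{n}\Psi_\sigma)^\dagger$, the adjoint with respect to the normalised-trace inner products $\langle x,y\rangle = \tr(xy^*)$; since this adjoint preserves complete positivity and interchanges unitality with trace preservation, $\Phi$ is again unital cptp with respect to the normalised traces. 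A direct computation using the formula $\omega_\zeta(a \otimes c) = \tr_n(ac^T)$, together with the defining relation of the adjoint, then verifies $\sigma_\Phi = \sigma$ on elementary tensors and hence everywhere. The main technical point to watch is keeping the normalisation constants straight so that the adjoint indeed lands in the class of unital cptp maps with respect to the normalised (rather than unnormalised) traces.
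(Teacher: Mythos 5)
Your argument is correct and follows essentially the same route as the paper: both rest on Proposition \ref{lemma_choi} to identify couplings with unital trace-preserving cp maps via the Choi matrix, pass to the adjoint with respect to the normalised trace pairings to reverse the direction of the map (using that this adjoint preserves complete positivity and swaps unitality with trace preservation), and exploit the identity $\langle (a\otimes c)\zeta,\zeta\rangle = \tr_n(ac^T)$ for the Bell vector. The only cosmetic difference is that you organise the computation as two inclusions of value sets rather than the paper's single chain of equalities $\sigma(T)=\frac{1}{n}\langle \Phi_{A_\sigma}^{*(n)}(T)\zeta,\zeta\rangle$, and your bookkeeping of the factor $\frac{1}{n}$ is consistent.
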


\begin{proof}
	Write $T = (T_{i,j})_{i,j=1}^n$, $T_{i,j}\in M_m$. 
	As in the proof of Proposition \ref{lemma_choi}, 
	for $\sigma \in \bist(\tr_n,\tr_m)$, set $\Phi = \Phi_{A_{\sigma}}$; thus, 
	$\frac{1}{n}\Phi : M_n\to M_m$ is a unital quantum channel. 
	Write, further, $A_\sigma = (\sigma_{i,j})_{i,j=1}^n$, where $\sigma_{i,j}\in M_m$. 
	We have 
	\begin{eqnarray*}
		\tr(\sigma T) 
		& = & \frac{1}{n}\sum_{i,j=1}^n \tr\nolimits_m \left(\sigma_{j,i}T_{i,j}\right) 
		= 
		\frac{1}{n}\sum_{i,j=1}^n \tr\nolimits_m \left(\Phi(\epsilon_{j,i})T_{i,j}\right)\\
		& = & 
		\frac{1}{n}\sum_{i,j=1}^n \tr\nolimits_n \left(\epsilon_{j,i} \Phi^*(T_{i,j})\right)
		= 
		\tr\left((\epsilon_{i,j})_{i,j=1}^n\Phi^{* (n)}(T)\right)\\
		& = & 
		\tr\left(\Phi^{* (n)}(T)\cdot n\zeta\zeta^*\right)
		= 
		\frac{1}{n}\left\langle\Phi^{* (n)}(T)\zeta,\zeta\right\rangle.
	\end{eqnarray*}
	The claim follows now  by noting that a map $\Psi : M_n\to M_m$ is unital and trace preserving if and only if so is its dual.
\end{proof}

\begin{proposition}\label{p_sep_alpha}
	Let $\xi$ be a unit vector in $\mathbb C^n\otimes\mathbb C^n$. 
	Write $\xi=\sum_{i=1}^n\lambda_i e_i\otimes f_i$ for its Schmidt decomposition. Then
	\begin{equation}\label{eq_eifi}
	\alpha(E_\xi)\geq \frac{1}{n}\left(\sum_{i=1}^n\lambda_i\right)^2 \geq \frac{1}{n}.
	\end{equation}
	Moreover, for $n=2$ the first inequality is an equality.
	\end{proposition}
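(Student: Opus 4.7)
The plan is to invoke Proposition \ref{p_eeal} to reformulate $\alpha(E_\xi)$ as a maximization over unital cptp maps and then analyse both inequalities through explicit choices of such maps, with the Schmidt basis providing a natural frame.

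For the lower bound, I would choose the reference vector $\zeta$ adapted to the Schmidt decomposition: set $\zeta = \frac{1}{\sqrt{n}}\sum_{i=1}^n e_i\otimes e_i$ where $(e_i)$ is the Schmidt basis of $\xi$ on the first factor. Let $U$ be the unitary sending $e_i \mapsto f_i$ and take $\Phi = \Ad(U^*)$, which is clearly unital and trace preserving. A direct computation with $\xi = \sum_i \lambda_i e_i\otimes f_i$ yields $(\id\otimes \Phi)(\xi\xi^*) = \eta\eta^*$ where $\eta = (I\otimes U^*)\xi = \sum_i \lambda_i e_i\otimes e_i$, so that
$$\langle \Phi^{(n)}(E_\xi)\zeta,\zeta\rangle = |\langle \eta,\zeta\rangle|^2 = \frac{1}{n}\left(\sum_{i=1}^n \lambda_i\right)^2.$$
The second inequality in \eqref{eq_eifi} is then immediate from $\sum_i \lambda_i \geq \sqrt{\sum_i \lambda_i^2} = 1$ (as $\|\xi\| = 1$ forces $\sum_i \lambda_i^2 = 1$, with all $\lambda_i \geq 0$).

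For the equality claim when $n = 2$, I would use the fact that the functional $\Phi \mapsto \langle \Phi^{(2)}(E_\xi)\zeta,\zeta\rangle$ is affine and continuous on the compact convex set of unital cptp maps $M_2 \to M_2$, so its supremum is attained at an extreme point. The structure theorem of Landau–Streater for $M_2$ says that every extreme point of this convex set is a unitary conjugation $\Ad(U^*)$. An analogous computation to the one above gives, for $\Phi = \Ad(U^*)$,
$$\langle \Phi^{(2)}(E_\xi)\zeta,\zeta\rangle = \frac{1}{2}|\lambda_1 u_{11} + \lambda_2 u_{22}|^2, \ \text{where } u_{ij} = \langle f_i, Ue_j\rangle.$$
It then suffices to maximise $|\lambda_1 u_{11} + \lambda_2 u_{22}|^2$ over $U \in U(2)$. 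A standard parametrisation of $2\times 2$ unitaries (where $|u_{11}| = |u_{22}|$ and the individual phases of $u_{11}, u_{22}$ are independently free) immediately gives maximum $(\lambda_1 + \lambda_2)^2$, attained by $U = I$.

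The main obstacle is the upper bound in the $n = 2$ equality, which genuinely relies on the special structure of unital channels on $M_2$ and has no obvious analogue in higher dimensions (where unital cptp maps need not be mixtures of unitary conjugations, so this argument breaks down and the inequality is expected to be strict in general).
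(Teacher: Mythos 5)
Your argument is correct and follows essentially the same route as the paper's proof: both reduce to Proposition \ref{p_eeal}, obtain the lower bound by evaluating at a unitary-conjugation channel adapted to the Schmidt bases, and for $n=2$ use that the extreme points of the set of unital cptp maps on $M_2$ are unitary conjugations (the paper cites K\"ummerer and Bhat--Pati--Sunder for this fact). The only immaterial slip is that the maximum in your final display is attained at the unitary whose matrix in the bases $(e_j)$, $(f_i)$ is the identity, i.e.\ $Ue_j=f_j$, rather than at $U=I$ itself.
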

	
	\begin{proof}
	Set $\zeta = \frac{1}{\sqrt{n}}\sum_{i=1}^n e_i\otimes e_i$.
	By convexity, the expression for $\alpha(E_\xi)$ in Proposition \ref{p_eeal}
	can be restricted to 
	the extreme points in the (convex) set of all unital quantum channels $\Phi$. 
	If $\Phi_U(T)=UTU^*$, where $U$ is unitary, then $\Phi_U$ is an extreme unital quantum channel. 
	 We have 
	 \begin{eqnarray*}
	 \left\langle\Phi^{(n)}(E_\xi)\zeta,\zeta\right\rangle
	 & = &
	 \left\langle(I\otimes U)E_\xi(1\otimes U)^*\zeta,\zeta\right\rangle
	 = \left\langle E_{(I\otimes U)\xi}\zeta,\zeta\right\rangle\\
	 & = &
	 \left|\langle (I\otimes U)\xi,\zeta\rangle\right|^2
	 = \frac{1}{n}\left|\sum_{i=1}^n\lambda_i\langle Uf_i,e_i\rangle\right|^2
	 \leq \frac{1}{n}\left(\sum_{i=1}^n\lambda_i\right)^2\hspace{-0.1cm}.
	 \end{eqnarray*}
If $U$ is the unitary, given by $Uf_i=e_i$, $i=1,\ldots, n$, then 
$\langle\Phi^{(n)}(E_\xi)\zeta,\zeta\rangle=\frac{1}{n}\left(\sum_{i=1}^n\lambda_i\right)^2$,  
and the first inequality in (\ref{eq_eifi}) follows. 
On the other hand, 
$$1=\|\xi\|^2=\sum_{i=1}^n\lambda_i^2\leq\left(\sum_{i=1}^n\lambda_i\right)^2,$$ 
which implies the second inequality in (\ref{eq_eifi}). 
	 
If $n = 2$ then the channels of unitary conjugation 
exhaust the extreme points of the convex set of all unital quantum channels 
\cite{kummer, bps}, and the claim follows from the previous paragraph. 
\end{proof}

Let 
$$w(\xi)
\hspace{-0.05cm}=\hspace{-0.05cm}
\inf\hspace{-0.05cm}\left\{\tr(a)\hspace{-0.05cm}+\hspace{-0.05cm}\tr(b) : a\in M_n^+, b\in M_m^+, 
E_{\xi}\leq E_{\xi}((a\otimes 1)\hspace{-0.08cm}+\hspace{-0.08cm} (1\otimes b))E_{\xi}\right\}\hspace{-0.05cm}.$$
Clearly, 
\begin{equation}\label{wleqgamma}
w(\xi) \leq \beta (E_\xi),
\ \ \ \xi\in \bb{C}^n\otimes\bb{C}^m, \|\xi\| = 1.
\end{equation}

Let $\Tr_A : M_n\otimes M_m\to M_m$ be the partial trace map, defined by the identity 
$$\Tr(\Tr\hspace{-0.06cm}\mbox{}_A(T)B) = \Tr(T(I\otimes B)), \ \ \ B\in M_m, \ T\in M_n\otimes M_m.$$
The partial trace $\Tr_B : M_n\otimes M_m\to M_n$ is defined similarly.

\begin{lemma}\label{p_w}
	Let $\xi \in \cnm$ be a unit vector and let $m\geq n$.  Then 
	$$w(\xi) = \frac{1}{m\|\Tr\mbox{}_B(E_{\xi})\|} = \frac{1}{m\|\Tr\mbox{}_A(E_{\xi})\|}.$$
	In particular, $w(\xi) \geq \frac{1}{m}$.
\end{lemma}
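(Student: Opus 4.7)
The plan is to rewrite the constraint defining $w(\xi)$ as a scalar inequality and then solve an elementary two-variable optimisation. First I would observe that, since $E_\xi = \xi\xi^*$ is rank one, for any hermitian $X \in M_n \otimes M_m$ the condition $E_\xi \leq E_\xi X E_\xi$ is equivalent to $\langle X\xi,\xi\rangle \geq 1$. Applying this to $X = a\otimes 1 + 1\otimes b$ and using the defining identities of the partial traces, the constraint in the definition of $w(\xi)$ becomes
$$\Tr(a\rho_A) + \Tr(b\rho_B) \geq 1, \qquad a \in M_n^+,\ b \in M_m^+,$$
where $\rho_A := \Tr_B(E_\xi) \in M_n^+$ and $\rho_B := \Tr_A(E_\xi) \in M_m^+$.

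Next, since $\tr$ denotes the normalised trace, $w(\xi)$ becomes the infimum of $\tfrac{1}{n}\Tr(a) + \tfrac{1}{m}\Tr(b)$ over this constraint set. A Schmidt decomposition $\xi = \sum_i \lambda_i e_i \otimes f_i$ gives $\rho_A = \sum_i \lambda_i^2 e_ie_i^*$ and $\rho_B = \sum_i \lambda_i^2 f_if_i^*$, so $\rho_A$ and $\rho_B$ share the same nonzero spectrum; in particular $\|\rho_A\| = \|\rho_B\|$, and both are positive with trace one. The key elementary fact is that for any $a \in M_n^+$ one has $\Tr(a\rho_A) \leq \|\rho_A\|\,\Tr(a)$, with equality attainable by taking $a$ to be a nonnegative multiple of any rank-one projection onto a top eigenvector of $\rho_A$; the analogous statement holds for $b$ and $\rho_B$.

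These two observations reduce the problem to the one-dimensional optimisation
$$\min_{t \in [0,1]} \left(\frac{t}{n\|\rho_A\|} + \frac{1-t}{m\|\rho_B\|}\right),$$
where $t$ plays the role of $\Tr(a\rho_A)$; one may assume the constraint is saturated by scaling down, and drop the regime $t + s > 1$ by the same rescaling. Since $\|\rho_A\| = \|\rho_B\|$ and $m \geq n$, the coefficient of $(1-t)$ does not exceed that of $t$, so the minimum is attained at $t = 0$ and equals $\tfrac{1}{m\|\rho_B\|}$. The equality $\|\rho_A\| = \|\rho_B\|$ then delivers both displayed expressions for $w(\xi)$, and $\|\rho_B\| \leq \Tr(\rho_B) = 1$ yields $w(\xi) \geq \tfrac{1}{m}$.

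I expect no serious obstacle here: the only point that requires care is the bookkeeping between the normalised traces appearing in the definition of $w$ and the unnormalised traces that pair naturally with the partial traces, together with the verification that the inequality $\Tr(a\rho_A) \leq \|\rho_A\|\,\Tr(a)$ is tight via the top-eigenspace construction. Once these are in place the result follows from a comparison of two linear coefficients.
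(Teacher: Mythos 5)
Your proof is correct and follows essentially the same route as the paper's: both reduce the constraint via the rank-one identity $E_\xi \leq E_\xi X E_\xi \Leftrightarrow \langle X\xi,\xi\rangle\geq 1$, pass to the Schmidt decomposition to identify $\|\Tr_B(E_\xi)\|=\|\Tr_A(E_\xi)\|=\lambda_1^2$, and then use $m\geq n$ to see that the optimum puts all the weight on $b$ supported on a top Schmidt vector. The only cosmetic difference is that you package the final optimisation through the inequality $\Tr(a\rho_A)\leq\|\rho_A\|\Tr(a)$, whereas the paper works directly with the diagonal entries $\mu_i=\langle ae_i,e_i\rangle$, $\nu_j=\langle bf_j,f_j\rangle$ in the Schmidt bases; both yield the same value $\frac{1}{m\lambda_1^2}$.
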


\begin{proof}
Fix a Schmidt decomposition
	$\xi = \sum_{i=1}^n \lambda_i e_i \otimes f_i$.
	A direct verification shows that 
	$$\Tr\hspace{-0.06cm}\mbox{}_B(E_{\xi}) = \sum_{i=1}^n \lambda_i^2 e_i e_i^*,$$
	and hence $\|\Tr_B(E_{\xi})\| = \lambda_1^2 = \|\Tr_A(E_{\xi})\|$. 

Note that if $a\in M_n^+$ and $b\in M_m^+$ then 
$$E_{\xi}\leq E_{\xi}((a\otimes 1) + (1\otimes b))E_{\xi} \ \Longleftrightarrow \ 
\left\langle ((a\otimes 1) + (1\otimes b))\xi,\xi \right\rangle \geq 1,$$
and the latter inequality can be rewritten as 
\begin{align*} 1 &\leq \sum_{i,j=1}^n \lambda_i \lambda_j \langle (a\otimes 1+1\otimes b)e_i \otimes f_i, e_j \otimes f_j \rangle 
= \sum_{i=1}^n \lambda_i^2 (\langle a e_i, e_i\rangle + \langle b f_i, f_i \rangle).
\end{align*}
In evaluating $w(\xi)$, we are thus led to minimising the expression
$\frac{1}{n}\sum_{i=1}^n \mu_i + \frac{1}{m}\sum_{j=1}^m \nu_j$ over all non-negative scalars $\mu_1, \ldots, \mu_n, \nu_1, \ldots, \nu_m$, satisfying the relation 
$\sum_{i = 1}^n \lambda_{i}^2 \left(\mu_i + \nu_i\right) \geq 1 $. 
Setting $\mu_{n+1}=\ldots=\mu_m = 0$, we have
$$\frac{1}{n}\sum_{i=1}^n \mu_i + \frac{1}{m}\sum_{j=1}^m \nu_j\geq \frac{1}{m}\sum_{j=1}^m (\mu_j+\nu_j),$$ and \begin{eqnarray*}
&&\min\left\{\frac{1}{m}\sum_{i=1}^m (\mu_i + \nu_i) : 
		\sum_{i = 1}^n \lambda_{i}^2 \left(\mu_i + \nu_i\right) \geq 1\right\}\\
		& = & 
		\min\left\{\frac{1}{m}\sum_{i=1}^n (\mu_i + \nu_i) : 
		\sum_{i = 1}^n \lambda_{i}^2 \left(\mu_i + \nu_i\right) \geq 1\right\}\\
		&=&\min\left\{\frac{1}{m}\sum_{i=1}^n \nu_i : 
		\sum_{i = 1}^n \lambda_{i}^2 \nu_i \geq 1\right\} = \frac{1}{m\lambda_1^2}.
	\end{eqnarray*}
It follows that $w(\xi) \geq\frac{1}{m\lambda_1^2}$. 
On the other hand, by taking 
$\nu_1=\frac{1}{\lambda_1^2}$ and  $\nu_i = 0$ for $i > 1$, 
we have that $\sum_{i = 1}^n \lambda_{i}^2 \left(\mu_i + \nu_i\right) \geq 1$ 
and $\frac{1}{n}\sum_{i=1}^n \mu_i + \frac{1}{m}\sum_{j=1}^m \nu_j=\frac{1}{m\lambda_1^2}$, giving $w(\xi)=\frac{1}{m\lambda_1^2}$.
\end{proof}

\begin{theorem}\label{p_sep}
	Let $\xi$ be a unit vector in $\cnm$, and assume that $n\leq m$.  
	Then
	\begin{itemize}
	\item[(i)] $\xi$ is separable if and only if $\alpha(E_{\xi}) = \frac{1}{m}$, if and only if $\gamma(E_{\xi}) = \frac{1}{m}$;
	\item[(ii)] $\xi$ is maximally entangled if and only if $\alpha(E_{\xi}) = 1$.
	\end{itemize}
\end{theorem}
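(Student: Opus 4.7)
I begin with part (i). If $\xi = e_1 \otimes f_1$ is separable (for unit vectors $e_1 \in \bb{C}^n$, $f_1 \in \bb{C}^m$), then $E_\xi = e_1 e_1^* \otimes f_1 f_1^*$, and equation (\ref{eq_finef}) of Proposition \ref{p_papb} gives
$$\alpha(E_\xi) = \beta(E_\xi) = \gamma(E_\xi) = \min\{\tr_n(e_1 e_1^*), \tr_m(f_1 f_1^*)\} = \min\{1/n, 1/m\} = 1/m,$$
using the hypothesis $n \leq m$.

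For the converses in (i), the central step is a uniform lower bound $\alpha(E_\xi) \geq 1/m$. Since $E_\xi \in M_n \otimes M_m$, Theorem \ref{th_bg} yields $\alpha(E_\xi) = \beta(E_\xi)$. Any pair $(a,b)$ with $a,b \geq 0$ and $E_\xi \leq a \otimes 1 + 1 \otimes b$ automatically satisfies $E_\xi \leq E_\xi(a \otimes 1 + 1 \otimes b)E_\xi$, so comparing defining infima gives $w(\xi) \leq \beta(E_\xi)$. Combined with Lemma \ref{p_w} this yields $\alpha(E_\xi) \geq 1/(m\lambda_1^2) \geq 1/m$, where $\lambda_1$ is the largest Schmidt coefficient of $\xi$. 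If $\alpha(E_\xi) = 1/m$, the resulting inequality $1/(m\lambda_1^2) \leq 1/m$ forces $\lambda_1 \geq 1$; combined with $\sum_i \lambda_i^2 = 1$ this means $\xi$ has a single nonzero Schmidt coefficient and is separable. If $\gamma(E_\xi) = 1/m$, then $1/m \leq \alpha(E_\xi) \leq \gamma(E_\xi) = 1/m$, reducing to the previous case.

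For part (ii), the forward direction uses Proposition \ref{p_sep_alpha} in the case $n = m$: if $\xi$ is maximally entangled, then $\lambda_i = 1/\sqrt{n}$ for $i = 1,\ldots,n$, whence $\alpha(E_\xi) \geq \frac{1}{n}(n/\sqrt{n})^2 = 1$; combined with $\alpha \leq \gamma \leq 1$ this gives $\alpha(E_\xi) = 1$. For the converse, appeal to the compactness in Remark \ref{r_less}(i) to extract $\sigma \in \bist(\tr_n,\tr_m)$ with $\sigma(E_\xi) = 1$, i.e.\ $\tr(A_\sigma E_\xi) = 1$. Since $A_\sigma \geq 0$, $\Tr A_\sigma = nm$, and $E_\xi = \xi\xi^*$ is a rank-one projection, this forces $A_\sigma = nm\, \xi\xi^*$. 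Writing out $\sigma_{\cl A} = \tr_n$ and $\sigma_{\cl B} = \tr_m$ in the Schmidt decomposition $\xi = \sum_i \lambda_i e_i \otimes f_i$ produces
$$\sum_i \lambda_i^2 e_i e_i^* = \tfrac{1}{n} I_n \qquad \text{and} \qquad \sum_i \lambda_i^2 f_i f_i^* = \tfrac{1}{m} I_m,$$
which, compared spectrally, force all $\lambda_i = 1/\sqrt{n}$ and both families $(e_i)$, $(f_i)$ to be orthonormal bases, yielding maximal entanglement.

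The main obstacle lies in the second identity above: its left side has rank at most $n$ while its right side has rank $m$, so both identities can hold simultaneously only when $n = m$. Consequently part (ii) has substantive content only in the square case; for $n < m$ the argument in fact shows $\alpha(E_\xi) < 1$ uniformly in $\xi$, so the stated equivalence holds vacuously (and the forward implication there requires reading ``maximally entangled'' as implicitly entailing $n = m$).
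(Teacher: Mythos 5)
Your argument is correct and follows essentially the same route as the paper: the forward and converse directions of (i) rest on Proposition \ref{p_papb}, the chain $w(\xi)\leq\beta(E_\xi)=\alpha(E_\xi)$ together with Lemma \ref{p_w}, and part (ii) rests on Proposition \ref{p_sep_alpha} and the marginal computation for the coupling forced to be supported on $\bb{C}\xi$. Your closing observation about part (ii) is a genuine point rather than a defect of your proof: for $n<m$ the $\cl B$-marginal condition makes $\alpha(E_\xi)=1$ impossible, while maximally entangled vectors (as defined in the paper) still exist, so the forward implication of (ii) only holds under the implicit assumption $n=m$ --- which is also what the paper's own proof tacitly uses, since Proposition \ref{p_sep_alpha} is stated only for $\bb{C}^n\otimes\bb{C}^n$. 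The only quibble is your word \emph{vacuously}: for $n<m$ the stated equivalence is not vacuous but false as literally written, precisely because of the asymmetry you identify.
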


\begin{proof}
(i) Let $\pi\in \{\alpha,\gamma\}$. 
	Suppose first that $\xi$ is separable, that is, $\xi = e\otimes f$ for some unit vectors $e\in \bb{C}^n $ and $f\in \bb{C}^m$. 
	We have that $E_{\xi} \leq 1\otimes (ff^*)$ and hence, 
	by the monotonicity of $\gamma$, we have that
	$\gamma(E_{\xi})  \leq \tr_m (ff^*) = \frac{1}{m}$. 
	It follows from Theorem \ref{th_bg}, inequality (\ref{wleqgamma}) and Lemma \ref{p_w},  that $\pi(E_{\xi}) = \frac{1}{m}$.
	
	Suppose that $\pi(E_{\xi}) = \frac{1}{m}$ for some $\pi\in \{\alpha,\gamma\}$.
	By Theorem \ref{th_bg}, inequality \eqref{wleqgamma} and Lemma \ref{p_w}, $w(\xi) = \frac{1}{m}$. 
	By Lemma \ref{p_w} again, $\|\Tr_B(E_{\xi})\| = 1$. Thus, $S_{\xi}$ has rank one;
	equivalently, $\xi$ is separable.
	
(ii)	Suppose that $\xi$ is maximally entangled. 
	Then, by Proposition \ref{p_sep_alpha}, $\alpha(E_{\xi}) \geq 1$. 
	By Theorem \ref{th_bg}, $\alpha(E_{\xi}) = 1$. 
	
	Conversely, suppose that $\alpha(E_{\xi}) = 1$. By Proposition \ref{qstrassen}, there exists a state
	$\sigma\in \bist(\tr_n,\tr_m)$ supported in the one-dimensional space generated by  $\xi$. 
	Thus $A_\sigma$ is a multiple of $\xi\xi^*$. Since $\tr(A_\sigma)=1$ and $\tr(\xi\xi^*)=\frac{1}{nm}$, 
	we have that $A_\sigma=(nm)\xi\xi^*$. Write $\xi = \sum_{i=1}^n e_i\otimes \xi_i$, where $(e_i)_{i=1}^n$ is the canonical basis of $\bb{C}^n$
	and $\xi_1,\ldots, \xi_n \in \bb{C}^m$. We have $A_\sigma=(nm)(\xi_i\xi_j^*)_{i,j=1}^n$. The condition $\sigma\in \bist(\tr_n,\tr_m)$ implies that for each $i,j=1, \ldots, n$, we have 
	$$ \langle\xi_i,\xi_j\rangle= \frac{nm}{n}\tr\nolimits_{m}(\xi_i\xi_j^*)
	= \tr\left(A_\sigma(\epsilon_{i,j}\otimes I)\right) = \tr\nolimits_n(\epsilon_{i,j}) = \frac{1}{n}\delta_{i,j},$$
	giving that $\xi$ is maximally entangled. 
\end{proof}

\begin{corollary}\label{c_cont}
	The set of values of $\alpha$ on non-zero projections in $M_n\otimes M_n$ is $[\frac{1}{n},1]$. 
	Moreover, if $E\in M_n\otimes M_n$ is a projection, then $\alpha(E)=\frac{1}{n}$ if and only if either $E=\tilde E\otimes ee^*$ or $E=ee^*\otimes \tilde E$ for a projection $\tilde E\in M_n$ and a unit vector $e\in \mathbb C^n$.
\end{corollary}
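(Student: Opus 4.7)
My plan is to first settle the range statement, then address the characterisation, splitting it into an easy and a more delicate implication.

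For the bounds, Theorem \ref{th_bg} immediately gives $\alpha(E) \leq 1$. For the lower bound on a nonzero projection $E$, I would pick any unit vector $\xi \in \ran E$; then $E_\xi \leq E$, so the monotonicity of $\alpha$ (Proposition \ref{p_monco}(ii)) combined with Proposition \ref{p_sep_alpha} yields $\alpha(E) \geq \alpha(E_\xi) \geq \frac{1}{n}$. To see that every value in $[\frac{1}{n}, 1]$ is attained when $n \geq 2$ (the case $n = 1$ being trivial), I would deform through rank-one projections along a continuous path of unit vectors $\xi(t) \in \cnm$, $t \in [0,1]$, with $\xi(0)$ separable and $\xi(1)$ maximally entangled; concretely, one may take the normalisation of $(1-t)\,e_1 \otimes e_1 + \frac{t}{\sqrt{n}} \sum_{i=1}^n e_i \otimes e_i$. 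Theorem \ref{p_sep} gives $\alpha(E_{\xi(0)}) = \frac{1}{n}$ and $\alpha(E_{\xi(1)}) = 1$, and since $\alpha$ is norm-continuous by Proposition \ref{p_monco}(ii), the intermediate value theorem produces projections of the form $E_{\xi(t)}$ attaining every intermediate value.

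For the ``if'' direction of the characterisation, if $E = \tilde E \otimes ee^*$ (with $\tilde E \neq 0$) then $E \leq 1 \otimes ee^*$, and Proposition \ref{p_papb} in the matrix case gives $\alpha(1 \otimes ee^*) = \min\{1, \tfrac{1}{n}\} = \tfrac{1}{n}$; monotonicity and the first paragraph then force $\alpha(E) = \frac{1}{n}$, and the other case is symmetric. For the ``only if'' direction, suppose $\alpha(E) = \frac{1}{n}$ and set $V = \ran E$. For each unit vector $\xi \in V$ we have $E_\xi \leq E$, so the bounds from the first paragraph give $\alpha(E_\xi) = \frac{1}{n}$, and Theorem \ref{p_sep}(i) forces $\xi$ to be a product vector; hence every element of $V$ is a product vector.

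The main obstacle is now the purely linear-algebraic step of deducing from this that $V = e \otimes V'$ or $V = V' \otimes f$ for some unit vector $e$ (resp.\ $f$) and subspace $V' \subseteq \bb{C}^n$. Fixing a nonzero $e_0 \otimes f_0 \in V$, I would note that for any nonzero $e \otimes f \in V$ the sum $e_0 \otimes f_0 + e \otimes f \in V$ must again have Schmidt rank at most one; since a vector of the form $a \otimes b + c \otimes d$ is a product only when $a$ and $c$ are proportional or $b$ and $d$ are proportional, this forces $e \in \bb{C} e_0$ or $f \in \bb{C} f_0$. Thus $V \subseteq A \cup B$ where $A = \bb{C} e_0 \otimes \bb{C}^n$ and $B = \bb{C}^n \otimes \bb{C} f_0$; since no vector space is the union of two of its proper subspaces, $V \subseteq A$ or $V \subseteq B$, giving the required tensor form of $E$ after normalising $e_0$ or $f_0$.
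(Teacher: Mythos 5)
Your proof is correct and follows essentially the same route as the paper's: monotonicity plus Proposition \ref{p_sep_alpha} for the lower bound, a continuous path from a separable to a maximally entangled vector together with the norm continuity from Proposition \ref{p_monco}(ii) for the range, Proposition \ref{p_papb} for the ``if'' direction, and Theorem \ref{p_sep}(i) to reduce the ``only if'' direction to the fact that a subspace consisting entirely of product vectors has the form $\bb{C}e\otimes V'$ or $V'\otimes \bb{C}f$. The only difference is that you spell out this last linear-algebraic step (every nonzero product vector in the subspace must share a tensor factor with a fixed one, so the subspace is covered by two subspaces and hence contained in one of them), which the paper leaves to the reader as an easy contradiction argument.
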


\begin{proof}
Let $t\to \eta_t$ be a continuous function from $[0,1]$ into $\bb{C}^n\otimes\bb{C}^n$ such that 
	$\eta_0$ is separable, while $\eta_1$ is maximally entangled. Note that the corresponding function $t \mapsto E_{\eta_t}$ is norm continuous.  
	By Theorem \ref{p_sep} and Proposition \ref{p_monco} (ii), the set 
	$\{\alpha(E_{\eta_t}) : t\in [0,1]\}$ coincides with the interval $[\frac{1}{n},1]$. 
	
	Now let $E$ be a projection in $M_n$ and assume that $\alpha(E)=\frac{1}{n}$. 
	By monotonicity, $\alpha(E)\geq\alpha(E_\xi)\geq\frac{1}{n}$ for any  unit vector $\xi$ in the range of $E$; 
	using Theorem \ref{p_sep}, we obtain that any vector in the range of $E$ is separable from which 
	easily implies (arguing by contradiction)
	that $E$ is either $\tilde E\otimes ee^*$ or $ee^*\otimes\tilde E$ for some projection $\tilde E\in M_n$ and some 
	unit vector $e\in\mathbb C^n$. The converse implication follows from Proposition \ref{p_papb}. 
\end{proof}

\begin{remark}\label{r_exa}
\rm 
{\bf (i)} 
The fact that the parameters $\alpha$ and $\gamma$ are distinct can also be obtained as a consequence of Corollary \ref{c_cont} 
--- indeed, the parameter $\gamma$ can, by its definition, take only finitely many rational values. 

\smallskip

{\bf (ii) } 
The parameters $\alpha$ and $w$ are distinct. Indeed, 
let $\xi_t = t(e_1\otimes e_1) + \sqrt{1-t^2} (e_2\otimes e_2)$ in $\bb{C}^2\otimes \bb{C}^2$, $t\in [\frac{1}{\sqrt{2}},1]$. 
	By Proposition \ref{p_sep_alpha}, $\alpha(E_{\xi_t})=\frac{1}{2}(t+\sqrt{1-t^2})^2=\frac{1}{2}+t\sqrt{1-t^2}$. 
	On the other hand, Lemma \ref{p_w} implies that  $w(\xi_t) = \frac{1}{2t^2}$.  
\end{remark}

We finish this section with an observation about the parameters $\alpha$ and $\gamma$ in the case 
where $n = m = 2$.

\begin{proposition}\label{p_22}
    Let $E$ be a projection in $M_2\otimes M_2$ and $\xi$ be a unit vector in $\bb{C}^2\otimes\bb{C}^2$. Then
    \begin{itemize}
 \item[(i)] 
 $\alpha(E)=1$ if and only if $E(\mathbb C^2\otimes\mathbb C^2)$ contains  a maximally entangled vector;
 
 \item[(ii)] 
$\gamma(E_\xi) = \begin{cases} 1 &  \textup{ if  } \xi \textup{ is entangled;} \\ \frac{1}{2} & \textup{ if  } \xi \textup{ is separable.} \end{cases}$
\end{itemize}    
\end{proposition}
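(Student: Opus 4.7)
The plan is to handle the two parts in turn; the crux is the ``only if'' implication in (i), for which the special low-dimensional structure of unital channels on $M_2$ is essential.

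For (i), the ``if'' direction is immediate by monotonicity: if $\xi \in E(\mathbb C^2 \otimes \mathbb C^2)$ is maximally entangled, then $E_\xi \leq E$, so Proposition \ref{p_monco} and Theorem \ref{p_sep}(ii) give $\alpha(E) \geq \alpha(E_\xi) = 1$, while Theorem \ref{th_bg} provides the reverse inequality. For the converse I would apply Proposition \ref{p_eeal}: the hypothesis $\alpha(E) = 1$ produces a unital cptp map $\Phi : M_2 \to M_2$ such that
\[
\left\langle \Phi^{(2)}(E)\,\zeta, \zeta \right\rangle = 1,
\]
where $\zeta = \tfrac{1}{\sqrt 2}\sum_{i=1}^{2} e_i \otimes e_i$. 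The decisive input, already invoked in the proof of Proposition \ref{p_sep_alpha}, is that on $M_2$ every unital cptp map is a convex combination of unitary conjugations; writing $\Phi = \sum_k q_k \,\mathrm{Ad}(U_k)$ and substituting recasts the equality as
\[
\sum_k q_k \bigl\langle E\,\eta_k, \eta_k\bigr\rangle = 1, \qquad \eta_k := (I \otimes U_k^*)\zeta,
\]
with each $\eta_k$ a maximally entangled unit vector. Since $\langle E\eta_k, \eta_k\rangle \leq 1$ for every $k$, equality forces $\langle E\eta_k, \eta_k\rangle = 1$, and hence $\eta_k \in E(\mathbb C^2 \otimes \mathbb C^2)$, for any $k$ with $q_k > 0$.

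For (ii), the separable case $\xi = e \otimes f$ is an immediate application of Proposition \ref{p_papb}: $E_\xi = ee^* \otimes ff^*$, and each rank-one projection in $M_2$ has normalised trace $1/2$, so $\gamma(E_\xi) = \min\{1/2, 1/2\} = 1/2$. For the entangled case, I would exploit the discreteness of the trace on $\cl P(M_2)$, whose values are $\{0, 1/2, 1\}$; by Theorem \ref{th_bg}, $\gamma(E_\xi) \leq 1$, so the infimum defining $\gamma(E_\xi)$ is attained within $\{0, 1/2, 1\}$. The value $0$ is excluded because $E_\xi \neq 0$; a value of $1/2$ would require, up to the symmetric roles of the factors, that $p = 0$ and $q = ff^*$ is rank one with $E_\xi \leq I \otimes ff^*$, forcing $\xi = g \otimes f$ for some $g$ and contradicting entanglement. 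Hence $\gamma(E_\xi) = 1$.

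The main obstacle is part (i)'s ``only if'' direction, whose argument rests entirely on the fact that every unital cptp map on $M_2$ is a mixture of unitary conjugations. Since this structural fact fails for $M_n$ with $n \geq 3$, the present approach does not generalise, and one should not expect the analogous statement to hold in higher dimensions.
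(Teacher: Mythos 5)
Your proof is correct and follows essentially the same route as the paper's: part (i) hinges, exactly as in the paper, on the K\"ummerer--Bhat--Pati--Sunder fact that the extreme unital trace-preserving channels on $M_2$ are unitary conjugations --- you decompose the optimizing channel from Proposition \ref{p_eeal} into unitary conjugations, while the paper equivalently passes to an extreme point of $\bist(\tr,\tr)$ and reads off the maximally entangled vector from its Choi matrix. Part (ii) matches the paper's argument (discreteness of the normalised trace on projections in $M_2$), with the separable case obtained directly from Proposition \ref{p_papb} rather than via Theorem \ref{p_sep}(i).
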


\begin{proof}
(i)    
Let $W=E(\mathbb C^2\otimes\mathbb C^2)$. If $W$ contains a maximally entangled unit vector $\xi\in \mathbb C^2\otimes\mathbb C^2$ then, 
    by Theorem \ref{p_sep}, $\alpha(E)\geq\alpha(E_{\xi})=1$ and hence $\alpha(E)=1$.
    
    Assume now $\alpha(E)=1$. 
    Then there exists $\sigma\in \bist(\tr,\tr)$ such that $\sigma(E)=1$, which is equivalent to  
    $\tr(A_\sigma(I-E))=0$ and hence, by the faithfulness of the trace, to 
    $EA_\sigma E=A_\sigma$ (indeed, our assumption yields that  $\tr (E^\perp A_\sigma^{\frac{1}{2}} A_\sigma^{\frac{1}{2}} E^\perp)=0$, so further $A_\sigma^{\frac{1}{2}} E^\perp=0$). We may assume that $\sigma$ is an extreme point. In fact, if $\sigma=\sum_{i=1}^n\lambda_i\sigma_i$ is a convex combination of states in $\bist(\tr,\tr)$, then 
    $\sigma=\sum_{i=1}^n\lambda_iE\sigma_iE$ and 
    $1=\sigma(1) = \sum_{i=1}^n\lambda_i\sigma_i(E)$, showing that 
    $\sigma_i(E) = 1$ for all $i=1,\ldots, n$.  

    Since $\sigma$ is now assumed an extreme point, 
    by \cite{kummer} the corresponding unital quantum channel $\Phi_{A_{\sigma/2}}$ 
    is given by a unitary conjugation. Thus there exists a unitary $U$ such that 
    $$\frac{1}{2}A_\sigma = \left[U\epsilon_{i,j}U^*\right]_{i,j=1}^2 = \left[(Ue_i)(Ue_j)^*\right]_{i,j=1}^2.$$ 
    Since $\sigma$ is supported on $E$ and $\frac{1}{4}A_\sigma$ is a projection,  
    $$\frac{1}{2}\left[(Ue_i)(Ue_j)^*\right]_{i,j=1}^2\leq E.$$ 
    But $\frac{1}{2}[(Ue_i)(Ue_j)^*]_{i,j=1}^2$ is the rank one projection of the maximally entangled vector
    $\frac{1}{\sqrt{2}}(Ue_1\otimes e_1 + Ue_2\otimes e_2)$, and the claim is proved. 
    
(ii)
If $\xi$ is separable then Theorem \ref{p_sep} (i) implies that $\gamma(E_{\xi}) = \frac{1}{2}$. 
Suppose that $\xi=\lambda_1 e_1 \otimes f_1 + \lambda_2 e_2 \otimes f_2$ 
in its Schmidt decomposition, and assume, by way of contradiction, that $\lambda_1 \lambda_2 \neq 0$. 
Let $p,q\in M_2$ be projections 
such that $E_\xi \leq p \otimes 1 \vee 1 \otimes q$; note that 
the latter condition is equivalent to the requirement $(p^\perp \otimes q^\perp)\xi = 0$. 
Suppose that $\tr (p)+\tr (q)<1$, in other words, that  $\tr (p^\perp)+\tr (q^\perp)>1$. 
This forces one of the projections, say $p^\perp$, to be equal $I$. 
But then $0 = (1 \otimes q^\perp) \xi = \lambda_1 e_1\otimes q^\perp f_1 + \lambda_2 e_2 \otimes q^\perp f_2$, 
and hence $0 = q^\perp f_1 = q^\perp f_2$, implying that $q^\perp=0$
and contradicts the assumotion that  $\tr (p)+\tr (q) < 1$.
\end{proof}


We note that the proof of Proposition \ref{p_22} uses the fact that the extreme points of the 
set of all unital quantum channels on $M_2$ are the unitary conjugation channels. It was 
proved in \cite{ohno} (and attributed to Arveson therein) that 
this is not true for $M_n$ with $n \geq 3$. It would be of interest to know if, nevertheless, 
Proposition \ref{p_22} remains valid in dimensions higher than two. 

\smallskip

\noindent {\bf Acknowledgments. } 
A.S. was  partially supported by the National Science Center (NCN) grant no. 2020/39/I/ST1/01566.
I.T. was supported by NSF Grant 2115071.
 L.T. would like to thank the Wenner-Gren Foundation which supported the visit of I.T. to Gothenburg. I.T and L.T acknowledge the support and hospitality at the Institute of Mathematics of the Polish Academy of Sciences during their visit in 2022. We thank the referee for a careful reading of our paper.

\end{document}